\documentclass[12pt]{article}
\usepackage{amsmath,latexsym}
\usepackage{enumerate}
\usepackage{amsthm} 

\setlength{\topmargin}{0.0in}
\setlength{\textheight}{8.0in}
\setlength{\oddsidemargin}{.355in}
\setlength{\evensidemargin}{.355in}
\setlength{\textwidth}{5.75in}
\newcommand\RR{\rm {I \! R}}

\newcommand\NN{\rm {I \! N}}
\newcommand\FF{{\cal F}}
\newcommand\GG{{\cal G}}

\DeclareMathOperator\sP{P}   %probability, treated as an operator
\newcommand{\rP}{\mathrm{P}} %probability, when spacing isn't a worry
\DeclareMathOperator\sE{E}   
\newcommand{\rE}{\mathrm{E}} 
\DeclareMathOperator\sI{I}   
 
\newcommand{\eps}{\varepsilon}
\DeclareMathOperator\st{st}

%\renewcommand{\baselinestretch}{1.4}

%\startlocaldefs
\numberwithin{equation}{section}
\theoremstyle{plain}
\newtheorem{theorem}{Theorem}[section]
\newtheorem{definition}[theorem]{Definition}
\newtheorem{corollary}[theorem]{Corollary}
\newtheorem{example}[theorem]{Example}
\newtheorem{lemma}[theorem]{Lemma}
\newtheorem{remark}[theorem]{Remark}

\newtheorem{question}[theorem]{Question}
%\endlocaldefs

%\newtheorem{theorem}{Theorem}[section]
\numberwithin{equation}{section}

\newcommand\address{Address: Department of Mathematics, University of North Texas,
1155 Union Circle \#311430, Denton, TX 76203-5017, USA; E-mail: allaart@unt.edu}
\newcommand\thankyou{Supported in part by Japanese GCOE Program G08: ``Fostering Top Leaders in Mathematics --- Broadening the Core and Exploring New Ground".} 

\title{Predicting the supremum: optimality of ``stop at once or not at all"\footnote{\thankyou}}
\author{Pieter C. Allaart \footnote{\address}}
\date{\today}

\begin{document}

\maketitle

\begin{abstract}
Let $(X_t)_{0\leq t\leq T}$ be a one-dimensional stochastic process with independent and stationary increments, either in discrete or continuous time. This paper considers the problem of stopping the process $(X_t)$ ``as close as possible" to its eventual supremum $M_T:=\sup_{0\leq t\leq T}X_t$, when the reward for stopping at time $\tau\leq T$ is a nonincreasing convex function of $M_T-X_\tau$. Under fairly general conditions on the process $(X_t)$, it is shown that the optimal stopping time $\tau$ takes a trivial form: it is either optimal to stop at time $0$ or at time $T$. For the case of random walk, the rule $\tau\equiv T$ is optimal if the steps of the walk stochastically dominate their opposites, and the rule $\tau\equiv 0$ is optimal if the reverse relationship holds. An analogous result is proved for L\'evy processes with finite L\'evy measure. The result is then extended to some processes with nonfinite L\'evy measure, including stable processes, CGMY processes, and processes whose jump component is of finite variation.

\bigskip
{\it AMS 2000 subject classification}: 60G40, 60G50, 60J51 (primary); 60G25 (secondary)

\bigskip
{\it Key words and phrases}: Random walk; L\'evy process; optimal prediction; ultimate supremum; stopping time; skew symmetry; convex function
\end{abstract}

\section{Introduction}

In recent years there has been a great deal of interest in optimal prediction problems of the form
\begin{equation}
\sup_{\tau\leq T}\sE[f(M_T-X_\tau)],
\label{eq:intro-objective}
\end{equation}
where $f$ is a nonincreasing function, $(X_t)_{t\geq 0}$ a one-dimensional stochastic process, $T>0$ a finite time horizon, and $M_T:=\sup\{X_t: 0\leq t\leq T\}$. The supremum in \eqref{eq:intro-objective} is taken over the set of all stopping times adapted to the process $(X_t)_{t\geq 0}$ for which $\rP(\tau\leq T)=1$.
For the case of Brownian motion, the problem \eqref{eq:intro-objective} has been investigated for several reward functions $f$, though it is often formulated as a penalty-minimization problem in the form
\begin{equation}
\inf_{\tau\leq T}\sE[\tilde{f}(M_T-X_\tau)],
\label{eq:minimize-penalty}
\end{equation}
where $\tilde{f}:=-f$. For instance, Graversen et al.~\cite{GPS} solved \eqref{eq:minimize-penalty} for standard Brownian motion and $\tilde{f}(x)=x^2$. Their result was generalized to $\tilde{f}(x)=x^\alpha$ for arbitrary $\alpha>0$ by Pedersen \cite{Pedersen}, who also considered the function $f=\chi_{[0,\eps]}$ for $\eps>0$ in \eqref{eq:intro-objective}. Du Toit and Peskir \cite{DuToit1} were the first to extend these results (for power functions $f$) to Brownian motion with arbitrary drift, which required an entirely new approach. 
More recently, Shiryaev et al.~\cite{SXZ} considered the problem \eqref{eq:intro-objective} for Brownian motion with drift and $f(x)=e^{-\sigma x}$, where $\sigma>0$. In that case the problem has the natural interpretation of maximizing the expected ratio of the selling price to the eventual maximum price in the Black-Scholes model for stock price movements. They observed that when the drift parameter lies outside a certain critical interval, the optimal rule $\tau^*$ becomes trivial; that is, either $\tau^*\equiv 0$ or $\tau^*\equiv T$. A year later, Du Toit and Peskir \cite{DuToit2} managed to prove that the optimal rule is trivial also in the critical interval. More precisely, their result was that $\tau^*\equiv 0$ when the drift is negative, and $\tau^*\equiv T$ when the drift is positive. While this may seem intuitively quite plausible, it is nontrivial to prove. Since the optimal rule changes abrubtly from $0$ to $T$ as the drift parameter passes through $0$, Du Toit and Peskir \cite{DuToit2} called it a ``bang-bang" stopping rule. They also showed that for the (seemingly quite similar) problem \eqref{eq:minimize-penalty} with $\tilde{f}(x)=e^{\sigma x}$, the optimal rule is not of bang-bang form, but transitions from $\tau^*\equiv 0$ to $\tau^*\equiv T$ in a nontrivial way throughout the critical interval.

In the discrete-time setting, an analogous result for Bernoulli random walk was obtained later the same year by Yam et al.~\cite{YYZ}, using ideas from \cite{DuToit2}. Here we put $T=N$, a positive integer, and write $X_n$ instead of $X_t$, where $\{X_n\}_{0\leq n\leq N}$ is a simple random walk with parameter $p$. Yam et al.~\cite{YYZ} considered both the function $f=\chi_0$, the characteristic function of the set $\{0\}$ (in which case the expectation in \eqref{eq:intro-objective} is just the probability of stopping at the ``top" of the random walk) and the function $f(x)=e^{-\sigma x}$, and concluded that in both cases, the optimal rule is of bang-bang type. Precisely, the optimal rule is $\tau\equiv N$ when $p>1/2$; $\tau\equiv 0$ when $p<1/2$; or any stopping rule $\tau$ satisfying $\rP(X_\tau=M_\tau \ \mbox{or}\ \tau=N)=1$ when $p=1/2$. It is worth noting that the case $f=\chi_0$ had already been considered for general {\em symmetric} random walks more than 20 years earlier by Hlynka and Sheahan \cite{Hlynka}.

The results for both discrete and continuous time were recently extended in Allaart \cite{Allaart}, where it is shown that the bang-bang principle holds for both Bernoulli random walk and Brownian motion with drift whenever $f$ is nonincreasing and convex. Equivalently, it holds for problem \eqref{eq:minimize-penalty} when $\tilde{f}$ is nondecreasing and concave, which is the case, for instance, for the natural penalty function $\tilde{f}(x)=x^\alpha$ with $0<\alpha\leq 1$. Allaart \cite{Allaart} gives simple sufficient conditions on $f$ for the optimal rules to be unique in the discrete-time case, and necessary and sufficient conditions for the case of Brownian motion. 

The aim of the present paper is to extend the result further still, to include more general random walks as well as certain L\'evy processes.
%It is natural to ask whether the bang-bang principle holds for a larger class of processes, and if so, what characteristics a process must possess in order for it to satisfy the bang-bang principle. This paper aims to give at least a partial answer to this question. 
First, in Section \ref{sec:random-walk}, it is shown that the bang-bang principle holds for any random walk whose increments stochastically dominate their opposites, or vice versa (see Theorem \ref{thm:random-walk} below). In Section \ref{sec:levy} an analogous result is proved for L\'evy processes, first for the case of finite L\'evy measure (Theorem \ref{thm:Levy-finite}), then for the more general case (Theorem \ref{thm:Levy-general}). This appears to require some notion of drift, and therefore it seems necessary to impose some additional conditions pertaining to the ``small jumps" of the process. One of these conditions can be omitted in the case when $f$ is continuous and bounded (Theorem \ref{thm:bounded-f}), but the author does not know whether it is needed in the general case. The extra conditions may seem restrictive, but they are satisfied by several commonly studied types of L\'evy processes including subordinators, symmetric stable processes, and CGMY processes.

A possible application of this research is in finance. Suppose you buy a share of stock on the first day of the month, which you must sell some time by the end of the month. Perhaps the stock price follows a random walk in discrete time, and your objective is to maximize the probability of selling the stock at the highest price over the month. In that case, let $X_t$ be the random walk, and let $f=\chi_0$. Or perhaps the stock price follows an exponentiated L\'evy process, such as geometric Brownian motion, and your goal is to maximize the expected ratio of the price at the time you sell to the eventual maximum price. In that case, let $X_t$ be the L\'evy process, and put $f(x)=e^{-\sigma x}$, where $\sigma>0$. In both examples the results of this paper imply, under suitable conditions on the process $X_t$, that it is either optimal to sell the stock immediately, or to keep it until the last day of the month. In fact, the result for the second example remains valid if one takes as objective function an arbitrary increasing convex function $g$ of the price ratio, since if $g:(0,\infty)\to\RR$ is increasing and convex, then $f(x)=g(e^{-\sigma x})$ is decreasing and convex.

After this work was begun, the author learnt that D. Orlov has also extended the bang-bang principle to certain L\'evy processes. Unfortunately, an English version of his paper was not available at the time the present article was nearing completion. In addition, a paper by Bernyk et al. \cite{BDP2} appeared in which problem \eqref{eq:minimize-penalty} is solved for stable L\'evy processes of index $\alpha\in(1,2)$ with no negative jumps, for the penalty function $\tilde{f}(x)=x^p$ with $p>1$. (We observe that for this case, $f=-\tilde{f}$ is not convex, so the results of the present note do not apply; indeed, the optimal rule is nontrivial and its determination requires significant analytical tools.) Some of the preparatory work for this last paper was done in \cite{BDP1}. 

%Finally, it should be noted that for {\em non}-convex $f$, the problem \eqref{eq:intro-objective} is much more difficult to solve in general; see Du Toit and Peskir \cite[Section 3]{DuToit2} or Pedersen \cite{Pedersen}.

\section{The maximum of a random walk} \label{sec:random-walk}

In this section, let $\{X_n\}_{n=0,1,\dots}$ be a random walk with general steps satisfying a form of skew-symmetry as follows: $X_0\equiv 0$, and for $n\geq 1$, $X_n=\sum_{k=1}^n\xi_k$, where $\xi,\xi_1,\xi_2,\dots$ are independent, identically distributed (i.i.d.) random variables for which either $\xi\geq_{\st}-\xi$ or $\xi\leq_{\st}-\xi$. Here, $\geq_{\st}$ denotes the usual stochastic order of random variables, defined by
\begin{equation*}
X\geq_{\st} Y \quad\Longleftrightarrow\quad \sP(X>t)\geq \sP(Y>t) \quad\mbox{for all $t\in\RR$}.
\end{equation*}
(See Chapter 17 of Marshall and Olkin \cite{Marshall} for a general treatment of the stochastic order.) Let $M_n:=\max_{0\leq k\leq n}X_k$ for $n=0,1,\dots,N$, where $N\in\NN$ is a finite time horizon. For a nonincreasing function $f:[0,\infty)\to\RR$, consider the optimal stopping problem
\begin{equation}
\sup_{0\leq\tau\leq N}\rE[f(M_N-X_\tau)],
\label{eq:objective}
\end{equation}
where the supremum is over the set of all stopping times $\tau\leq N$ adapted to the natural filtration $\{\FF_n\}_{0\leq n\leq N}$ of the process $\{X_n\}_{0\leq n\leq N}$. We note that since $f$ is bounded above, the expectation in \eqref{eq:objective} always exists, though it could take the value $-\infty$.

The above setup includes Bernoulli random walk with arbitrary parameter $p\in(0,1)$ as a special case, but is of course much more general.

\begin{theorem} \label{thm:random-walk}
Assume that either $\xi\geq_{\st}-\xi$ or $\xi\leq_{\st}-\xi$, and let $f:[0,\infty)\to\RR$ be nonincreasing  and convex. Consider the problem \eqref{eq:objective}.%\vspace{-2mm}
\begin{enumerate}[(i)]%\setlength{\itemsep}{-1mm}
\item If $\xi\geq_{\st}-\xi$, the rule $\tau\equiv N$ is optimal.
\item If $\xi\leq_{\st}-\xi$, the rule $\tau\equiv 0$ is optimal.
\item If $\xi\stackrel{d}{=}-\xi$, any rule $\tau$ satisfying
%\begin{equation*}
$\rP(X_\tau=M_\tau\ \mbox{or}\ \tau=N)=1$
%\end{equation*}
is optimal.
\end{enumerate}
\end{theorem}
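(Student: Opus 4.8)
The plan is to reduce part (i) to a single comparison between the value of stopping immediately and the value of the rule ``stop at $N$'', to peel off the convexity of $f$ by an integral representation, and finally to settle the resulting stochastic-dominance statement by a reflection argument based on the skew-symmetry. First I would set up the dynamic programming. Writing $D_n:=M_n-X_n\ge 0$ for the current distance below the running maximum, one checks that $D_{n+1}=(D_n-\xi_{n+1})^+$, so $(n,D_n)$ is a Markov state, and that $M_N-X_n=\max(D_n,\widehat M_{N-n})$, where $\widehat M_k:=\max_{0\le j\le k}(\xi_1'+\cdots+\xi_j')$ is the maximum of an independent copy of the walk of length $k$ (so $\widehat M_k\ge 0$). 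Hence the conditional reward for stopping at time $n$ is $g_{N-n}(D_n)$, with
\[ g_m(d):=\rE\,f\bigl(\max(d,\widehat M_m)\bigr), \]
while the value of following $\tau\equiv N$ from state $(n,d)$ is $W_{N-n}(d)$, where
\[ W_m(d):=\rE\,f\bigl(\max(d,\widehat M_m)-T_m\bigr),\qquad T_m:=\xi_1'+\cdots+\xi_m'. \]
Since $W_m(d)=\rE\,W_{m-1}((d-\xi_1)^+)$ and $W_0=f$, a backward induction on $n$ shows the value function equals $W_{N-n}$, and therefore $\tau\equiv N$ is optimal, \emph{provided} one proves
\[ g_m(d)\le W_m(d)\qquad\text{for all }m\ge 0,\ d\ge 0, \]
which I will call $(\ast)$: stopping now never beats waiting until $N$.

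The main subtlety is that $(\ast)$ cannot be obtained by a naive ``stop now versus stop one step later'' comparison: because of the drift carried by $T_m$, that one-step comparison can go the wrong way, so the problem is not monotone in the Chow--Robbins--Siegmund sense and one really must compare stopping now against waiting all the way to $N$. To exploit convexity structurally I would use that every nonincreasing convex $f$ on $[0,\infty)$ can be written as $f(x)=\alpha-\beta x+\int_0^\infty (t-x)^+\,\nu(dt)$ with $\beta\ge 0$ and $\nu\ge 0$. By linearity it then suffices to verify $(\ast)$ for the three generators. For $f\equiv 1$ both sides coincide; for $f(x)=-x$ the inequality collapses to $0\le\rE\,T_m=m\,\rE\,\xi_1$, which holds because $\xi_1\ge_{\st}-\xi_1$ forces $\rE\,\xi_1\ge 0$; so the entire difficulty is concentrated in the ramp $f_t(x)=(t-x)^+$.

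For $f=f_t$, writing $U:=\max(d,\widehat M_m)$ and $V:=U-T_m$, inequality $(\ast)$ reads $\rE\,(t-U)^+\le\rE\,(t-V)^+$ for every $t\ge 0$. Since $\rE\,(t-Z)^+=\int_{-\infty}^{t}\rP(Z\le y)\,dy$, this is exactly the assertion that $U$ dominates $V$ in the increasing concave (second-order stochastic dominance) order. I would attack it by reflection. The time-reversal identity $(\widehat M_m,T_m)\stackrel{d}{=}(T_m-\underline T_m,T_m)$ gives $V\stackrel{d}{=}\max(d-T_m,-\underline T_m)$, so $V$ is governed by the \emph{reflected} walk with increments $-\xi_i'$ while $U$ is governed by the original walk, and $\xi_1\ge_{\st}-\xi_1$ lets one couple the two walks so that the original path lies pathwise above the reflected one. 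At the level of the running maxima this already settles the case $d=0$, where $(\ast)$ reduces to $-\underline T_m\le_{\st}\widehat M_m$. The hard part, which I expect to be the main obstacle, is $d>0$: the coupling controls the maxima but not the minima, so it does not compare $U$ and $V$ pathwise, and one is forced to combine the reflection with the convexity of the ramp (equivalently, to argue with integrated distribution functions rather than distribution functions). I would handle this by decomposing each path at the time its maximum is attained and reflecting only the post-maximum increments, weighting paths by the likelihood ratio produced by $\xi_1\ge_{\st}-\xi_1$; balancing that drift factor against the spreading caused by the ramp is the crux of the whole argument.

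Finally, part (ii) follows from the dual inequality, obtained by interchanging the roles of the walk and its reflection so that now $-\underline T_m$ dominates $\widehat M_m$; one shows in the same way that stopping at $0$ is never beaten. Part (iii) then follows by combining (i) and (ii): when $\xi_1\stackrel{d}{=}-\xi_1$ one has $\widehat M_m\stackrel{d}{=}-\underline T_m$, so $(\ast)$ holds with equality at $d=0$, meaning that stopping at a running maximum is exactly as good as waiting, whereas for $d>0$ the inequality is strict. Consequently a rule is optimal precisely when it stops only at times $n$ with $D_n=0$, that is $X_n=M_n$, or at $n=N$, which is the stated characterization.
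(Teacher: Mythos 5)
Your reduction is sound, and it is in fact the same skeleton as the paper's proof: your $g_m(d)$ and $W_m(d)$ are exactly the paper's $G(m,d)=\sE[f(d\vee M_m)]$ and $D(m,d)=\sE[f(d\vee M_m-X_m)]$, your inequality $(\ast)$ is precisely Corollary \ref{cor:consequence}, and granting $(\ast)$ your optional-sampling/backward-induction argument for (i), the dual argument for (ii), and the equality-at-$d=0$ argument for (iii) all go through (though your claim in (iii) that the inequality is strict for $d>0$, so that the stated rules are the \emph{only} optimal ones, is both unproved and false in general --- take $f$ constant; fortunately the theorem asserts only sufficiency). The genuine gap is that $(\ast)$ itself, which you correctly identify as the crux, is never proved. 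For $d>0$ you offer only a plan: decompose each path at its argmax, reflect the post-maximum increments, and weight paths ``by the likelihood ratio produced by $\xi_1\geq_{\st}-\xi_1$''. That device is unavailable: stochastic dominance yields a pointwise coupling ($\xi_i\geq\tilde{\xi}_i$ with $\tilde{\xi}_i\stackrel{d}{=}-\xi_i$), not a likelihood ratio. Indeed the laws of $\xi_1$ and $-\xi_1$ can be mutually singular, e.g.\ $\rP(\xi_1=2)=\rP(\xi_1=-1)=1/2$ satisfies $\xi_1\geq_{\st}-\xi_1$ but the two laws have disjoint supports, so no Radon--Nikodym weighting exists. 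What the paper does at exactly this point --- and what is missing from your proposal --- is Lemma \ref{lem:key-inequality}: with $h(z,m,x)=f(z\vee m-x)-f\big(z\vee(m-x)\big)$, split $\rE[h(z,M_n,X_n)]$ over $\{X_n>0\}$ and $\{X_n<0\}$, convert the second piece by the time-reversal identity $(M_n-X_n,X_n)\stackrel{d}{=}(\tilde{M}_n,-\tilde{X}_n)$, dominate the first piece using the coupling, and then absorb the two pieces jointly via the elementary convexity inequality $h(z,m,x)+h(z,m-x,-x)\geq 0$ for $0<x\leq m$, which pairs each path with its reflection. That pairing inequality is the concrete mechanism that ``balances the drift against the spreading''; without it, or a worked-out substitute, your argument does not close.

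A second, repairable, gap: the representation $f(x)=\alpha-\beta x+\int_0^\infty(t-x)^+\,\nu(dt)$ does not hold for every nonincreasing convex $f$. It requires $f(x)+\beta x$ to tend to a finite limit, and it fails for instance for $f(x)=-\sqrt{x}$, where the integral term would be infinite for every $x$. You would need to first replace $f$ by its tangent-line truncation $f_K(x)=\max\{f(x),\,f(K)+f'(K^+)(x-K)\}$, prove $(\ast)$ for each $f_K$, and let $K\to\infty$ by monotone convergence --- a step that also has to contend with the possibility $\sE[f(z\vee M_n-X_n)]=-\infty$, which the paper handles separately at the start of its proof of Lemma \ref{lem:key-inequality}. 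Note also that even once this is repaired, the reduction to ramps $(t-x)^+$ buys nothing by itself: the obstruction is the dependence between $\widehat{M}_m$ and $T_m$ (they are built from the same increments), and resolving that dependence is exactly what the paper's reflection-plus-pairing lemma accomplishes for general convex $f$ in one stroke.
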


\begin{remark}
{\rm
By the assumption of convexity $f$ must be continuous on $(0,\infty)$, but it may have a jump discontinuity at $x=0$. Thus, in particular, the theorem covers the important case $f=\chi_0$, the characteristic function of the set $\{0\}$. In that case, the problem comes down to maximizing the probability of stopping at the highest point of the walk, so it can be thought of as a random walk version of the secretary (or best-choice) problem.
}
\end{remark}

\begin{remark}
{\rm
The condition $\xi\geq_{\st}-\xi$ holds for any random variable $\xi$ whose distribution is symmetric about some point $m\geq 0$, as is easy to see. For instance, any normal random variable $\xi$ with a nonnegative mean satisfies $\xi\geq_{\st}-\xi$. It follows that Theorem \ref{thm:random-walk} applies to all $\xi$ with symmetric distributions.
}
\end{remark}

\begin{example}
{\rm
An example of a nonsymmetric distribution for which $\xi\geq_{\st}-\xi$ is the Gumbel extreme value distribution, with distribution function $F(x)=\exp(-e^{-x})$, $x\in\RR$. To see this, let $g(x)=\exp(-e^x)+\exp(-e^{-x})$. Then
\begin{equation*}
g'(x)=\exp(-e^x+x)\left[\exp(e^x-e^{-x}-2x)-1\right].
\end{equation*}
Since it is easy to see (for instance by using a series expansion) that $e^x-e^{-x}-2x\geq 0$ for $x\geq 0$, it follows that $g$ is increasing on $[0,\infty)$. And since $\lim_{x\to\infty}g(x)=1$, this means that $g(x)<1$ for $x\geq 0$. Hence,
\begin{equation*}
1-F(x)\geq F(-x), \qquad x\geq 0.
\end{equation*}
So if $\xi\sim F$, then $\xi\geq_{\st}-\xi$.
}
\end{example}

\begin{example}
{\rm
The condition $\xi\geq_{\st}-\xi$ in statement (i) cannot be replaced by the condition $\rE(\xi)\geq 0$. For instance, let $\rP(\xi=3)=1/3=1-\sP(\xi=-1)$, let $f=\chi_0$, and take $n=2$. Even though $\rE(\xi)=1/3>0$, the optimal rule is easily seen to be $\tau\equiv 0$ rather than $\tau\equiv 2$.
}
\end{example}

In case of Bernoulli random walk, simple sufficient conditions on the function $f$ such that the optimal rules given above be unique are given in \cite{Allaart}. There an example is also given to show that without convexity of $f$, the conclusion of Theorem \ref{thm:random-walk} may fail in general.

The proof of Theorem \ref{thm:random-walk} uses the following generalization of Lemma 2.1 in \cite{Allaart}. Note that, compared to that lemma, a somewhat different method of proof is needed here.

\begin{lemma} \label{lem:key-inequality}
Let $f$ be as in Theorem \ref{thm:random-walk}, and suppose $\xi\geq_{\st}-\xi$. Then
\begin{equation}
\sE[f(z\vee M_n-X_n)]\geq \sE\big[f\big(z\vee (M_n-X_n)\big)\big]
\label{eq:key-inequality}
\end{equation}
for all $n\leq N$ and all $z\geq 0$.
\end{lemma}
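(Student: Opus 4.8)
The plan is to first rewrite both sides so that the comparison becomes transparent. Since $z\vee M_n-X_n=(z-X_n)\vee(M_n-X_n)$, while the right-hand side keeps the floor at $z$, the inequality \eqref{eq:key-inequality} is equivalent to
\[
\sE\big[f\big((z-X_n)\vee W_n\big)\big]\ge \sE\big[f\big(z\vee W_n\big)\big],\qquad W_n:=M_n-X_n\ge 0.
\]
The two integrands differ only in the first slot ($z-X_n$ versus $z$), so because $f$ is nonincreasing their pointwise difference has the sign of $X_n$: it is $\ge 0$ on $\{X_n\ge 0\}$ and $\le 0$ on $\{X_n\le 0\}$. Thus the lemma asserts that the upward bias forced by $\xi_1\ge_{\st}-\xi_1$ (which in particular gives $\sE\xi_1\ge 0$) outweighs the unfavorable contribution, and convexity of $f$ is what will make this quantitative.

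The analytic heart is a one-increment estimate, which I would isolate as the case $n=1$. There $W_1=(-\xi_1)^+$, and the claim reduces to $\sE[f((z-\xi_1)^+)]\ge \sE[f(z\vee(-\xi_1))]$. Writing the integrand difference as a function of $\eta=\xi_1$, splitting on $\{\eta\ge 0\}$ and $\{\eta<0\}$, and pairing the value at $\eta=t$ with the value at $\eta=-t$, convexity supplies exactly the two inequalities $f(z-t)+f(z+t)\ge 2f(z)$ (for $t\le z$) and $f(0)+f(z+t)\ge f(z)+f(t)$ (for $t>z$). Together these say that the favorable contribution $A(t)$ at $+t$ and the unfavorable contribution $B(t)$ at $-t$ satisfy $A(t)+B(t)\ge 0$. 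Since $A$ is nonnegative and nondecreasing in $t$, while $\xi_1\ge_{\st}-\xi_1$ assigns to $[t,\infty)$ at least as much mass as $-\xi_1$ does for every $t$, one gets $\sE[A]+\sE[B]\ge\int(A+B)\,d\mu_-\ge 0$, where $\mu_-$ is the reflection of the negative part of the law of $\xi_1$. This settles $n=1$.

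To reach general $n$ I would induct using the clean first-step recursion: conditioning on $\xi_1$ and restarting the walk gives $z\vee M_n-X_n=\big((z-\xi_1)^+\vee M'_{n-1}\big)-X'_{n-1}$, hence $V_n(z)=\sE_{\xi_1}[V_{n-1}((z-\xi_1)^+)]$, where $V_m(z):=\sE[f(z\vee M_m-X_m)]$ and $(X',M')$ is an independent copy of the walk. Applying the induction hypothesis to the $(n-1)$-walk reduces the problem to a comparison purely between the drawdown functionals, namely $\sE[f((z-\xi_1)\vee W')]\ge\sE[f(z\vee(-X_n)\vee W')]$, with $W'=M'_{n-1}-X'_{n-1}$, $X_n=\xi_1+X'_{n-1}$, and $\xi_1$ independent of $(W',X'_{n-1})$. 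The favorable event is now exactly $\{\xi_1\ge 0\}$, matching the pivot of the stochastic order; moreover the constraint $W'\ge -X'_{n-1}$ (a drawdown dominates minus the endpoint) forces $z\vee(-X_n)\vee W'=z\vee W'$ on $\{\xi_1\ge 0\}$, so that on the favorable side the unfavorable term collapses to a constant and the monotone structure of the $n=1$ step reappears.

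The main obstacle will be the unfavorable side $\{\xi_1<0\}$. Conditioning further on $(W',X'_{n-1})$ and attempting the same $\pm t$ pairing does \emph{not} close, because the analog of $A(t)+B(t)\ge 0$ is genuinely false pointwise in the conditioning variables: the map $u\mapsto f(u\vee w)$ is not convex (it has a concave kink at $w$), and for $f$ near affine one produces strict violations precisely when $X'_{n-1}$ is large and positive, forcing the term $z\vee(t-X'_{n-1})\vee w$ down to $z\vee w$. Hence the inequality is not a pointwise consequence of convexity and must be obtained globally, using the joint law of the endpoint $X_n$ and the running minimum together with the drift. I would resolve this by reducing, via linearity of \eqref{eq:key-inequality} in $f$ and the integral representation of nonincreasing convex functions, to the wedge generators $f_t(x)=(t-x)^+$ and the affine part; the affine part follows cleanly from the monotone coupling $X_k\ge Y_k$ with $Y\stackrel{d}{=}-X$ (giving $M_n\ge_{\st}\max_k(-X_k)\stackrel{d}{=}W_n$), while for the wedges the inequality becomes $\sE[\min((u+X_n)^+,(t+m_n)^+)]\ge\sE[\min(u,(t+m_n)^+)]$ with $m_n=\min_{k\le n}X_k$, to be attacked by reflection/duality identities for the pair $(X_n,m_n)$. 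Establishing this global coupling is the crux of the proof.
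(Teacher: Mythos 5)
Your proposal is not a complete proof: it is a program whose decisive step is, by your own admission, left open. The $n=1$ computation and the first-step recursion $V_n(z)=\sE[V_{n-1}((z-\xi_1)^+)]$ are both correct, and your diagnosis that the resulting inductive comparison cannot be closed by a pointwise $\pm t$ pairing is fair. But the repair you outline --- decompose $f$ into an affine part plus a mixture of wedges $f_t(x)=(t-x)^+$, handle the affine part via the monotone coupling, and handle the wedges by an inequality for $(X_n,m_n)$ ``to be attacked by reflection/duality identities'' --- stops exactly where the mathematical content of the lemma lies. The wedge inequality you write down is essentially a restatement of \eqref{eq:key-inequality} for the generators $f_t$; no argument for it is given, and it is precisely the point at which the hypothesis $\xi_1\geq_{\st}-\xi_1$ and convexity must interact globally with the joint law of the endpoint and the maximum. (The affine part, incidentally, needs more than the domination $M_n\geq_{\st}W_n$ you cite: one must also invoke the identity $\sE[X_n]=\sE[M_n]-\sE[M_n-X_n]$ coming from time reversal, and some integrability care, since the lemma permits the expectations to be $-\infty$.) So there is a genuine gap: the crux is unestablished.

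The paper closes this gap without induction on $n$ and without decomposing $f$. Fix $n$, couple the walk with its dual $\tilde X$ (increments $\tilde\xi_i\stackrel{d}{=}-\xi_i$ with $\xi_i\geq\tilde\xi_i$ pointwise), and set $h(z,m,x):=f(z\vee m-x)-f\big(z\vee(m-x)\big)$. The time-reversal identity \eqref{eq:reflection} converts the unfavorable contribution $\sE[h(z,M_n,X_n);X_n<0]$ into $\sE[h(z,\tilde M_n-\tilde X_n,-\tilde X_n);\tilde X_n>0]$, while the coupling and the monotonicity of $f$ give $\sE[h(z,M_n,X_n);X_n>0]\geq\sE[h(z,\tilde M_n,\tilde X_n);\tilde X_n>0]$. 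Adding these, everything is expressed on the single event $\{\tilde X_n>0\}$, and it remains only to check the pointwise inequality $h(z,m,x)+h(z,m-x,-x)\geq 0$ for $0<x\leq m$, which follows from \eqref{eq:convex-property} since $z\vee m-x\leq z\vee(m-x)$. In other words, the correct pairing is not ``step $+t$ against step $-t$'' conditionally on the rest of the walk, but ``path against its own time reversal,'' and with that pairing convexity does close the argument pointwise at every $n$. That is the idea missing from your outline; if you adopt it, no wedge decomposition or induction is needed.
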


Since the statement of the lemma involves only expectations, we may construct the random walk on a convenient probability space. Recall first that if $X\geq_{\st}Y$, then $X$ and $Y$ can be defined on a common probability space $(\Omega,\FF,\sP)$ so that $X(\omega)\geq Y(\omega)$ for all $\omega\in\Omega$. (See, for instance, \cite[Theorem 17.B.1]{Marshall}.) Thus, on a sufficiently large probability space, we can construct the random variables $\xi_1,\dots,\xi_N$ together with another set of random variables  $\tilde{\xi}_1,\dots,\tilde{\xi}_N$ such that the random vectors $(\xi_1,\tilde{\xi}_1),\dots,(\xi_N,\tilde{\xi}_N)$ are independent, $\tilde{\xi}_i\stackrel{d}{=}-\xi_1$ for each $i$, and $\xi_i\geq\tilde{\xi}_i$ for each $i$. Let
$\tilde{X}_0\equiv 0$, and $\tilde{X}_n=\sum_{k=1}^n\tilde{\xi}_k$, for $n=1,2,\dots,N$. Finally, define $\tilde{M}_n:=\max_{0\leq k\leq n}\tilde{X}_k$, $n=0,1,\dots,N$. Clearly,
$X_n\geq\tilde{X}_n$ and $M_n\geq\tilde{M}_n$ for every $n$.

It is also useful to define
\begin{equation*}
Z_n:=M_n-X_n \quad\mbox{and}\quad \tilde{Z}_n:=\tilde{M}_n-\tilde{X}_n, \qquad n=0,1,\dots,N.
\end{equation*}
One checks easily that
\begin{equation}
Z_n\leq\tilde{Z}_n, \qquad n=0,1,\dots,N.
\label{eq:Z-relationship}
\end{equation}

The key to the proof of the lemma is that, for each fixed $n$,
\begin{equation}
(M_n-X_n,X_n)\stackrel{d}{=}(\tilde{M}_n,-\tilde{X}_n),
\label{eq:reflection}
\end{equation}
as follows from an easy time-reversal argument.

\begin{proof}[Proof of Lemma \ref{lem:key-inequality}] 
The lemma holds trivially (with equality) when $z=0$, so assume $z>0$. We must first deal separately with the case when $\sE[f(z\vee M_n-X_n)]=-\infty$. Let $\alpha:=[f(z)-f(0)]/z$. Then the convexity of $f$ implies that, for all $u\geq 0$,
\begin{equation}
f(u+z)-f(u)\geq\alpha z.
\label{eq:convexity-bound}
\end{equation}
Using the algebraic inequality $z\vee m-x\leq z\vee(m-x)+z$ (valid for $z\geq 0$ and $m\geq 0$) and the fact that $f$ is nonincreasing, we get
\begin{equation*}
f(z\vee m-x)\geq f(z\vee(m-x)+z)\geq f(z\vee(m-x))+\alpha z,
\end{equation*}
in view of \eqref{eq:convexity-bound}. Thus, if $\sE[f(z\vee M_n-X_n)]=-\infty$, then $\sE\big[f\big(z\vee (M_n-X_n)\big)\big]=-\infty$ as well, and the lemma holds in this case.

Assume for the remainder of the proof that $\sE[f(z\vee M_n-X_n)]>-\infty$.
Since $n$ is fixed, we omit the subscripts and write $M=M_n$, $X=X_n$, $Z=Z_n$, and similarly for their tilded counterparts. Let
\begin{equation*}
h(z,m,x):=f(z\vee m-x)-f\big(z\vee(m-x)\big),
\end{equation*}
so that it is to be shown that
\begin{equation}
\rE[h(z,M,X)]\geq 0.
\label{eq:h-expectation}
\end{equation}
The above expectation exists and is finite, because $\alpha z\leq h(z,m,x)\leq|\alpha|z$. We begin by writing
\begin{equation*}
\rE[h(z,M,X)]=\sE[h(z,M,X);X>0]+\sE[h(z,M,X);X<0].
\end{equation*}
Using \eqref{eq:reflection}, we can write the second expectation as
\begin{equation}
\sE[h(z,M,X);X<0]=\sE[h(z,\tilde{M}-\tilde{X},-\tilde{X});\tilde{X}>0].
\label{eq:rewrite-second}
\end{equation}
On the other hand, we claim that
\begin{equation}
\rE[h(z,M,X);X>0]\geq\sE[h(z,\tilde{M},\tilde{X});\tilde{X}>0].
\label{eq:expectation-relationship}
\end{equation}
To see this, note that $h(z,M,X)=0$ on $\{X>0, M-X>z\}$, and hence,
\begin{align*}
h(z,M,X)\sI(X>0)&=\big(f(z\vee M-X)-f(z)\big)\sI(X>0, M-X\leq z)\\
&=\big(f(\max\{z-X,Z\})-f(z)\big)\sI(X>0, Z\leq z)\\
&\geq \big(f(\max\{z-X,Z\})-f(z)\big)\sI(\tilde{X}>0, \tilde{Z}\leq z)\\
&\geq \big(f(\max\{z-\tilde{X},\tilde{Z}\})-f(z)\big)\sI(\tilde{X}>0, \tilde{Z}\leq z)\\
&=h(z,\tilde{M},\tilde{X})\sI(\tilde{X}>0).
\end{align*}
Here the first inequality follows since $\{\tilde{X}>0, \tilde{Z}\leq z\}\subset \{X>0, Z\leq z\}$ by \eqref{eq:Z-relationship}, $\max\{z-X,Z\}\leq z$ on $\{X>0, Z\leq z\}$, and $f$ is nonincreasing. The second inequality follows since $f$ is nonincreasing and $\max\{z-X,Z\}\leq \max\{z-\tilde{X},\tilde{Z}\}$.

Combining \eqref{eq:rewrite-second} and \eqref{eq:expectation-relationship}, we obtain
\begin{equation}
\rE[h(z,M,X)]\geq \sE[h(z,\tilde{M},\tilde{X})+h(z,\tilde{M}-\tilde{X},-\tilde{X});\tilde{X}>0].
\label{eq:together}
\end{equation}

Next, the convexity of $f$ implies that for all $0\leq x<y$ and all $d>0$,
\begin{equation}
f(x)-f(x+d)\geq f(y)-f(y+d),
\label{eq:convex-property}
\end{equation}
as is easily checked. Thus, for $z\geq 0$ and $0<x\leq m$, we have
\begin{align*}
h(z,m,x)+h(z,m-x,-x)&=\big[f(z\vee m-x)-f\big(z\vee(m-x)\big)\big]\\
&\qquad+\big[f\big(z\vee(m-x)+x\big)-f(z\vee m)\big]\\
&=[f(z\vee m-x)-f(z\vee m)]\\
&\qquad-\big[f\big(z\vee(m-x)\big)-f\big(z\vee(m-x)+x\big)\big]\\
&\geq 0,
\end{align*}
where the inequality follows by \eqref{eq:convex-property} with $d=x$, since $x>0$ implies that $z\vee m-x\leq z\vee(m-x)$. This, together with \eqref{eq:together}, yields \eqref{eq:h-expectation}. 
\end{proof}

\begin{corollary} \label{cor:consequence}
Under the hypotheses of Lemma \ref{lem:key-inequality},
\begin{equation}
\sE[f(z\vee M_n-X_n)]\geq \sE[f(z\vee M_n)]
\label{eq:p-inequality}
\end{equation}
for all $n\leq N$ and all $z\geq 0$.
\end{corollary}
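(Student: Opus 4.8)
The plan is to combine Lemma \ref{lem:key-inequality} with the reflection identity \eqref{eq:reflection} and the pathwise coupling between $(X_n)$ and $(\tilde X_n)$ introduced just before the proof of the lemma. The crucial observation is that the left-hand side of \eqref{eq:p-inequality} is literally the left-hand side of \eqref{eq:key-inequality}; so it suffices to bound $\sE[f(z\vee(M_n-X_n))]$ from below by $\sE[f(z\vee M_n)]$ and then to concatenate that estimate with the lemma.

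Concretely, I would proceed in three relations. First, I would invoke Lemma \ref{lem:key-inequality} directly to obtain $\sE[f(z\vee M_n-X_n)]\geq\sE[f(z\vee(M_n-X_n))]$. Second, I would rewrite the right-hand side using \eqref{eq:reflection}: passing to the first marginal of the identity $(M_n-X_n,X_n)\stackrel{d}{=}(\tilde M_n,-\tilde X_n)$ gives $M_n-X_n\stackrel{d}{=}\tilde M_n$, and since $w\mapsto f(z\vee w)$ is a fixed measurable function, this yields $\sE[f(z\vee(M_n-X_n))]=\sE[f(z\vee\tilde M_n)]$. Third, I would use the coupling under which $\tilde X_n\leq X_n$, hence $\tilde M_n\leq M_n$, pathwise; because $f$ is nonincreasing and $z\vee\tilde M_n\leq z\vee M_n$, we get $f(z\vee\tilde M_n)\geq f(z\vee M_n)$ pointwise, and taking expectations gives $\sE[f(z\vee\tilde M_n)]\geq\sE[f(z\vee M_n)]$. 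Chaining the three relations establishes \eqref{eq:p-inequality}.

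I do not expect a substantial obstacle, as the corollary is essentially a repackaging of the lemma through reflection and monotonicity. The only point requiring a little care is the degenerate case $\sE[f(z\vee M_n-X_n)]=-\infty$: one must verify that the whole chain of (in)equalities remains valid in $[-\infty,\infty)$. This causes no trouble because $f$ is bounded above, so every expectation is well-defined in $[-\infty,\infty)$ and the monotonicity of expectation is preserved in the extended reals; in that case every term in the chain equals $-\infty$ and \eqref{eq:p-inequality} holds as an equality.
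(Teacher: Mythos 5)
Your proof is correct and follows essentially the same route as the paper: the paper likewise rewrites the right-hand side of \eqref{eq:key-inequality} via \eqref{eq:reflection} as $\sE[f(z\vee \tilde{M}_n)]$ and then uses $\tilde{M}_n\leq_{\st}M_n$ (which the coupling gives pathwise) together with the monotonicity of $f$ to conclude. Your extra care with the $-\infty$ case is sound but not strictly needed, since all inequalities hold in the extended reals exactly as you observe.
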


\begin{proof}
By \eqref{eq:reflection}, the inequality \eqref{eq:key-inequality} can be expressed alternatively as
\begin{equation}
\sE[f(z\vee M_n-X_n)]\geq \sE[f(z\vee \tilde{M}_n)].
\label{eq:alternative-form}
\end{equation}
But $\rE[f(z\vee \tilde{M}_n)]\geq \sE[f(z\vee M_n)]$, since $\tilde{M}_n\leq_{\st} M_n$ and $f$ is nonincreasing. Thus, \eqref{eq:p-inequality} follows. 
\end{proof}

\begin{proof}[Proof of Theorem \ref{thm:random-walk}] 
The main idea in the proof below is essentially due to Du Toit and Peskir \cite{DuToit2}; see Yam et al.~\cite{YYZ} for the discrete-time case. 

(i) Suppose first that $\xi_1\geq_{\st}-\xi_1$. Construct the random variables $\xi_k$, $X_k$, $M_k$, $Z_k$ and $\tilde{\xi}_k$, $\tilde{X}_k$, $\tilde{M}_k$ and $\tilde{Z}_k$ on a common probability space as in the discussion following the statement of Lemma \ref{lem:key-inequality}. 
Define the $\sigma$-algebras
\begin{equation*}
\GG_k:=\sigma(\{\xi_1,\dots,\xi_k,\tilde{\xi}_1,\dots,\tilde{\xi}_k\}), \qquad k=0,1,\dots,N.
\end{equation*}
It will be important later in the proof that the increments $X_k-X_j$ and $\tilde{X}_k-\tilde{X}_j$ are independent of $\GG_j$, for all $0\leq j\leq k$. Note further that if the stopping time $\tau\equiv N$ is optimal among the set of all stopping times relative to the filtration $\{\GG_k\}$, then it is certainly optimal among the stopping times relative to $\{\FF_k\}$. Thus, it is sufficient to show that
\begin{equation}
\rE[f(M_N-X_\tau)]\leq\sE[f(M_N-X_N)]
\label{eq:domination}
\end{equation}
for any stopping time $\tau$ relative to $\{\GG_k\}$. Define the functions
\begin{equation*}
G(k,z):=\sE[f(z\vee M_k)],\qquad D(k,z):=\sE[f(z\vee M_k-X_k)],
\end{equation*}
for $z\geq 0$ and $k=0,1,\dots,N$. Note that $G(k,z)$ and $D(k,z)$ can possibly take the value $-\infty$. Let $\tau\leq N$ be any stopping time. An easy exercise using the independent and stationary increments of the random walk $\{X_k\}$ leads to
\begin{equation}
\rE[f(M_N-X_\tau)|\GG_\tau]=G(N-\tau,Z_\tau),
\label{eq:first-conditional}
\end{equation}
and
\begin{equation}
\rE[f(M_N-X_N)|\GG_\tau]=D(N-\tau,Z_\tau).
\label{eq:second-conditional}
\end{equation}
Now Corollary \ref{cor:consequence} says that $D(k,z)\geq G(k,z)$, and hence
\begin{equation*}
\rE[f(M_N-X_\tau)|\GG_\tau]\leq \sE[f(M_N-X_N)|\GG_\tau].
\end{equation*}
Taking expectations on both sides gives \eqref{eq:domination}, as desired.

(ii) Suppose next that $\xi_1\leq_{\st}-\xi_1$. Apply again the construction following the statement of Lemma \ref{lem:key-inequality}, but this time with $\xi_i\leq\tilde{\xi}_i$ for all $i$. Observe that all the other relationships between random variables and their tilded counterparts are now reversed as well, i.e.
\begin{equation*}
X_k\leq \tilde{X}_k, \qquad M_k\leq\tilde{M}_k, \qquad Z_k\geq\tilde{Z}_k,
\end{equation*}
for $k=0,1,\dots,N$. Define the filtration $\{\GG_k\}$ and the function $G(k,z)$ as in the proof of part (i) above, and let
\begin{equation*}
\tilde{D}(k,z):=\sE[f(z\vee \tilde{M}_k-\tilde{X}_k)].
\end{equation*}
In place of \eqref{eq:alternative-form}, we now have the inequality
\begin{equation*}
\sE[f(z\vee \tilde{M}_k-\tilde{X}_k)]\geq \sE[f(z\vee M_k)],
\end{equation*}
or in other words, $\tilde{D}(k,z)\geq G(k,z)$. Furthermore, the fact that $f$ is nonincreasing implies that $G(k,z)$ is nonincreasing in $z$, and therefore,
\begin{equation*}
G(N-j,Z_j)\leq G(N-j,\tilde{Z}_j)
\end{equation*}
for each $j$. By \eqref{eq:reflection}, $\rE[f(M_N)]=\sE[f(\tilde{Z}_N)]$. 
Putting these facts together, we obtain for any stopping time $\tau$ relative to $\{\GG_k\}$, by the same kind of reasoning as in the proof of part (i),
\begin{align}
\begin{split}
\rE[f(M_N-X_\tau)]&=\sE[G(N-\tau,Z_\tau)]\leq \sE[G(N-\tau,\tilde{Z}_\tau)]\\
&\leq \sE[\tilde{D}(N-\tau,\tilde{Z}_\tau)]=\sE[f(\tilde{Z}_N)]=\sE[f(M_N)].
\end{split}
\label{eq:chain}
\end{align}
Hence, the rule $\tau\equiv 0$ is optimal.

(iii) Suppose finally that $\xi_1\stackrel{d}{=}-\xi_1$. This is a special case of part (i), so the rule $\tau\equiv N$ is optimal.
% Here, however, $\tilde{Z}_k=Z_k$ for all $k$, so that the first inequality in \eqref{eq:chain} holds with equality for any stopping time $\tau$. Now
Now let $\tau$ be any stopping time such that with probability one, $X_\tau=M_\tau$ or $\tau=N$. Since $G(0,z)=f(z)=D(0,z)$ for all $z\geq 0$ and $G(k,0)=\sE[f(M_k)]=\sE[f(\tilde{Z}_k)]=\sE[f(Z_k)]=D(k,0)$ for all $k$, \eqref{eq:first-conditional} and \eqref{eq:second-conditional} give equality in \eqref{eq:domination}. Hence, $\tau$ is optimal. 
\end{proof}

\section{The maximum of a L\'evy process} \label{sec:levy}

A careful study of the proofs in the previous section reveals that the essential property of the random walk is its independent and stationary increments. Furthermore, in order to construct the random walk $\{X_n\}$ and its dual  $\{\tilde{X}_n\}$ on a common probability space in such a way that the increments of $\{X_n\}$ uniformly dominate those of $\{\tilde{X}_n\}$ (or vice versa), the step-size distribution had to satisfy a type of skew symmetry. With this in mind, we can now extend the result to a much larger class of stochastic processes. 

The general continuous-time analog of a random walk is a {\em L\'evy process}, which is defined as a stochastic process on $[0,\infty)$ with independent and stationary increments which starts at $0$ and is continuous in probability. Following standard practice, we assume also that the process has almost surely right-continuous sample paths with left-hand limits everywhere (or, for short, that the process is {\em rcll}). If $X=(X_t)_{t\geq 0}$ is a (one-dimensional) L\'evy process, it is uniquely determined by the L\'evy-Khintchine formula
\begin{equation*}
\rE\left[e^{iuX_t}\right]=e^{t\eta(u)},
%\label{eq:LK-representation-1}
\end{equation*}
where
\begin{equation}
\eta(u)=i\gamma u-\frac{\sigma^2 u^2}{2} +\int_{\RR\backslash\{0\}}\left[e^{iuy}-1-iuy\chi_{(-1,1)}(y)\right]\nu(dy).
\label{eq:LK-representation-2}
\end{equation}
In this expression, the {\em L\'evy measure} $\nu$ satisfies $\int_{\RR\backslash\{0\}}(y^2\wedge 1)\nu(dy)<\infty$, but $\nu$ need not be finite. We say that $X$ is {\em generated by the triplet $(\gamma,\sigma^2,\nu)$}.

%In order to ensure that all expectations in this section are finite, we assume in addition that
%\begin{equation}
%\int_{|y|\geq 1}|y|\nu(dy)<\infty.
%\label{eq:finite-mean}
%\end{equation}
%It is well known (e.g. \cite[Theorem 2.5.2]{Applebaum}) that this condition guarantees the integrability of $X_t$ for all $t>0$.

Define the supremum process $M=(M_t)_{t\geq 0}$ by
\begin{equation*}
M_t:=\sup_{0\leq s\leq t}X_s, \qquad t\geq 0.
\end{equation*}
%As a consequence of \eqref{eq:finite-mean}, we have (e.g. \cite[Theorem 25.18]{Sato}) that 
%\begin{equation}
%\rE(M_t)<\infty \qquad\mbox{for all $t>0$}.
%\label{eq:integrable-sup}
%\end{equation}

If $\nu$ is finite, then $X$ is simply the sum of a Brownian motion with drift and a compound Poisson process, and it is straightforward to adapt the result of the previous section. This is done in Subsection \ref{subsec:interlacing} below. If $\nu$ is not finite, however, complications arise in attempting to couple the process $X$ with its dual, and some additional conditions appear to be needed to overcome these difficulties. This is made precise in Subsection \ref{subsec:general}. Finally, in Subsection \ref{subsec:bounded}, we eliminate one of the extra conditions in the case when $f$ is continuous and bounded.

\subsection{The case of finite $\nu$} \label{subsec:interlacing}

We consider first the case when $\nu$ is finite. Then we may put
\begin{equation*}
b:=\gamma-\int_{0<|y|<1}y\nu(dy),
\end{equation*}
%and write the L\'evy-Khintchine formula as
%\begin{equation}
%\eta(u)=ibu-\frac{\sigma^2 u^2}{2} +\int_{\RR\backslash\{0\}}\left(e^{iuy}-1\right)\nu(dy).
%\label{eq:finite-LK-formula}
%\end{equation}
and express $X_t$ pathwise in the form
\begin{equation}
X_t=bt+\sigma B_t+\sum_{i=1}^{N(t)}\xi_i,
\label{eq:path-representation}
\end{equation}
where $B_t$ is a standard Brownian motion, $\xi_1, \xi_2,\dots$ are i.i.d. random variables with distribution $\nu/|\nu|$, and $(N(t))_{t\geq 0}$ is a Poisson process with intensity $|\nu|$. In this representation, the Poisson process, the Brownian motion and the $\xi_i$'s are all independent of one another.

\begin{definition}
Let $X=(X_t)_{t\geq 0}$ be a L\'evy process of the form \eqref{eq:path-representation}, with finite L\'evy measure $\nu$.%\vspace{-1mm}
\begin{enumerate}[(i)]%\setlength{\itemsep}{-1mm}
\item $X$ is {\em right skew symmetric (RSS)} if $b\geq 0$ and  $\nu\big((a,\infty)\big)\geq\nu\big((-\infty,-a)\big)$ for all $a>0$.
\item $X$ is {\em left skew symmetric (LSS)} if $b\leq 0$ and  $\nu\big((a,\infty)\big)\leq\nu\big((-\infty,-a)\big)$ for all $a>0$.
\item $X$ is {\em symmetric} if $b=0$ and $\nu\big((a,\infty)\big)=\nu\big((-\infty,-a)\big)$ for all $a>0$.
\end{enumerate}
\end{definition}

Note that the condition regarding $\nu$ in the definition of RSS is equivalent to $\xi_1\geq_{\st}-\xi_1$, because if the inequality holds for all $a>0$, it holds for all $a\in\RR$.
The following result is the analog of Theorem \ref{thm:random-walk} for a L\'evy process with finite L\'evy measure $\nu$.

\begin{theorem} \label{thm:Levy-finite}
Let $X=(X_t)_{t\geq 0}$ be a L\'evy process with finite L\'evy measure $\nu$, adapted to a filtration $\{\FF_t\}$, such that $X_t-X_s$ is independent of $\FF_s$ for all $0\leq s\leq t$. Assume $X$ is either RSS or LSS, and let $f$ be as in Theorem \ref{thm:random-walk}. For fixed $T>0$, consider the problem
\begin{equation}
\sup_{0\leq\tau\leq T} \sE[f(M_T-X_\tau)],
\label{eq:Levy-objective}
\end{equation}
where the supremum is over all stopping times $\tau$ relative to the filtration $\{\FF_t\}$ with $\rP(\tau\leq T)=1$.%\vspace{-1mm}
\begin{enumerate}[(i)]%\setlength{\itemsep}{-1mm}
\item If $X$ is RSS, the rule $\tau\equiv T$ is optimal.
\item If $X$ is LSS, the rule $\tau\equiv 0$ is optimal.
\item If $X$ is symmetric, any rule $\tau$ satisfying $\rP(X_\tau=M_\tau\ \mbox{or}\ \tau=T)=1$ is optimal.
\end{enumerate}
\end{theorem}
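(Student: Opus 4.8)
The plan is to re-run the argument of Section~\ref{sec:random-walk} essentially verbatim, with the continuous time parameter $t$ replacing the discrete index $n$, after observing that the only probabilistic inputs to that argument are (a) a coupling of $X$ with a dual process $\tilde{X}$ whose increments are dominated by those of $X$, and (b) the reflection identity \eqref{eq:reflection}; everything else is the pure convexity bookkeeping of Lemma~\ref{lem:key-inequality}, which is insensitive to whether time is discrete or continuous. First I would exploit the finiteness of $\nu$ through the pathwise representation \eqref{eq:path-representation}. Assuming $X$ is RSS, so that $b\geq 0$ and $\xi_1\geq_{\st}-\xi_1$, I would build on a common probability space a dual process $\tilde{X}_t=-bt+\sigma B_t+\sum_{i=1}^{N(t)}\tilde{\xi}_i$, using the \emph{same} Brownian motion $B$ and the \emph{same} Poisson process $N$, but with jump sizes coupled so that $\tilde{\xi}_i\stackrel{d}{=}-\xi_1$ and $\xi_i\geq\tilde{\xi}_i$ for every $i$. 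Then $X_t-\tilde{X}_t=2bt+\sum_{i=1}^{N(t)}(\xi_i-\tilde{\xi}_i)\geq 0$ for all $t$, so that $X_t\geq\tilde{X}_t$, $M_t\geq\tilde{M}_t$, and $Z_t\leq\tilde{Z}_t$ exactly as in \eqref{eq:Z-relationship}, while $\tilde{X}\stackrel{d}{=}-X$ as processes.

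With this coupling in hand, I would prove the continuous-time analogs of Lemma~\ref{lem:key-inequality} and Corollary~\ref{cor:consequence}, namely $\sE[f(z\vee M_t-X_t)]\geq\sE[f(z\vee(M_t-X_t))]\geq\sE[f(z\vee M_t)]$ for all $t\leq T$ and $z\geq 0$. The proof of Lemma~\ref{lem:key-inequality} uses only the algebraic inequality $z\vee m-x\leq z\vee(m-x)+z$, the convexity bounds \eqref{eq:convexity-bound} and \eqref{eq:convex-property}, the coupling relations just established, and the reflection identity \eqref{eq:reflection}; none of these involves the discreteness of the index, so the argument transfers line for line with $M_t,X_t,\tilde{M}_t,\tilde{X}_t$ in place of their subscripted-$n$ counterparts. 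The one genuinely continuous-time ingredient is the reflection identity $(M_t-X_t,X_t)\stackrel{d}{=}(\tilde{M}_t,-\tilde{X}_t)$, which I would obtain from the duality (time-reversal) lemma for L\'evy processes: the time-reversed process $\hat{X}_s:=X_t-X_{(t-s)^-}$, $0\leq s\leq t$, has the same law as $(X_s)_{0\leq s\leq t}$, and reading off its terminal value and running supremum yields $(M_t-X_t,X_t)\stackrel{d}{=}(-\inf_{s\leq t}X_s,X_t)$, which is \eqref{eq:reflection} once one recalls $\tilde{X}\stackrel{d}{=}-X$.

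Finally I would assemble the optimal stopping argument exactly as in the proof of Theorem~\ref{thm:random-walk}. Enlarging to the filtration $\GG_t:=\sigma(\{X_s,\tilde{X}_s:s\leq t\})$, whose post-$t$ increments of both processes are independent of $\GG_t$, it suffices to prove $\sE[f(M_T-X_\tau)]\leq\sE[f(M_T-X_T)]$ for every $\GG$-stopping time $\tau\leq T$. The strong Markov property and stationary independent increments give, with $G(t,z):=\sE[f(z\vee M_t)]$ and $D(t,z):=\sE[f(z\vee M_t-X_t)]$, the conditional identities $\sE[f(M_T-X_\tau)\mid\GG_\tau]=G(T-\tau,Z_\tau)$ and $\sE[f(M_T-X_T)\mid\GG_\tau]=D(T-\tau,Z_\tau)$; since the corollary gives $D\geq G$, taking expectations proves (i). Part (ii) follows by running the construction with the coupling reversed ($\xi_i\leq\tilde{\xi}_i$) and chaining the monotonicity of $G$ in $z$ with $\tilde{D}\geq G$ as in \eqref{eq:chain}, and (iii) is the symmetric special case $b=0$, with the additional family of optimal rules handled by the boundary equalities $G(0,z)=f(z)=D(0,z)$ and $G(t,0)=D(t,0)$ exactly as in the discrete-time proof. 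I expect the main obstacle to be the careful justification of the reflection identity and of the conditional formulas at the rcll level: one must check that the running supremum is measurable and that the time-reversal lemma applies cleanly despite the left-limits and downward jumps (so that $\inf_{s\leq t}X_{s^-}=\inf_{s\leq t}X_s$ in the relevant sense), a point that the standing rcll assumption is designed to handle but that did not arise in the discrete-time proof.
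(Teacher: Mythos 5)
Your proposal is correct and follows essentially the same route as the paper's own proof: the identical coupling of $X$ with its dual via the representation \eqref{eq:path-representation} (same Brownian motion, same Poisson process, pointwise-dominated jumps $\xi_i\geq\tilde{\xi}_i$), the same time-reversal argument for the reflection identity $(M_t-X_t,X_t)\stackrel{d}{=}(\tilde{M}_t,-\tilde{X}_t)$, and the same transfer of Lemma \ref{lem:key-inequality}, Corollary \ref{cor:consequence} and the stopping argument of Theorem \ref{thm:random-walk} through an enlarged filtration and the strong Markov property. One small repair is needed: since the theorem permits an arbitrary filtration $\{\FF_t\}$ satisfying the independence hypothesis, the enlarged filtration must be $\GG_t:=\FF_t\vee\sigma(\{\tilde{X}_s:0\leq s\leq t\})$ (as the paper takes it) rather than the joint natural filtration $\sigma(\{X_s,\tilde{X}_s:s\leq t\})$, for otherwise an $\FF$-stopping time need not be a $\GG$-stopping time and your reduction ``it suffices to consider $\GG$-stopping times'' would not cover all admissible $\tau$.
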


If $\nu=0$, then $X$ is a Brownian motion with drift. Thus, the above theorem generalizes recent results of Shiryaev et al.~\cite{SXZ}, Du Toit and Peskir \cite[Section 4]{DuToit2} and Allaart \cite{Allaart}.

\begin{definition}
Let $X=(X_t)_{t\geq 0}$ be a L\'evy process . The {\em dual process} of $X$, denoted $\tilde{X}$, is a process such that $(\tilde{X}_t)_{t\geq 0}\stackrel{d}{=}(-X_t)_{t\geq 0}$. The {\em dual supremum process}, denoted $\tilde{M}$, is the process defined by $\tilde{M}_t:=\sup_{0\leq s\leq t}\tilde{X}_s$, for $t\geq 0$.
\end{definition}

If $X$ is a L\'evy process generated by the triplet $(\gamma,\sigma^2,\nu)$, then $\tilde{X}$ is a L\'evy process with triplet $(-\gamma,\sigma^2,\tilde{\nu})$, where $\tilde{\nu}(A)=\nu(-A)$ for any Borel set $A\subset \RR$. Note that if $X$ is RSS, then $\tilde{X}$ is LSS and vice versa.

\begin{lemma} \label{lem:Levy-reflection}
Let $X$ be any L\'evy process. Then, for each fixed $t\geq 0$,
\begin{equation*}
(M_t-X_t,X_t)\stackrel{d}{=}(\tilde{M}_t,-\tilde{X}_t).
%\label{eq:Levy-reflection}
\end{equation*}
\end{lemma}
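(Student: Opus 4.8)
The plan is to deduce the continuous-time identity from its already-established discrete-time counterpart \eqref{eq:reflection} by passing to the limit along dyadic skeletons, thereby reusing the time-reversal argument that proves the random-walk case. Fix $t>0$. For each $m\geq 1$, put $t_k:=kt/2^m$ for $k=0,1,\dots,2^m$ and consider the sampled process $X_k^{(m)}:=X_{t_k}$. Because $X$ has stationary and independent increments, $\{X_k^{(m)}\}_{k=0}^{2^m}$ is a random walk; likewise the dual $\tilde X$, being itself a L\'evy process, has a skeleton $\tilde X_k^{(m)}:=\tilde X_{t_k}$ which is a random walk whose increments are distributed as the negatives of those of $X_k^{(m)}$, so that it plays the role of the dual walk in \eqref{eq:reflection}. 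Writing $M^{(m)}$ and $\tilde M^{(m)}$ for the running maxima of these two walks, the distributional identity \eqref{eq:reflection} applied to the skeleton gives, for each fixed $m$,
\begin{equation*}
\big(M^{(m)}_{2^m}-X^{(m)}_{2^m},\ X^{(m)}_{2^m}\big)\stackrel{d}{=}\big(\tilde M^{(m)}_{2^m},\ -\tilde X^{(m)}_{2^m}\big).
\end{equation*}

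Next I would let $m\to\infty$. The endpoints are exact: $X^{(m)}_{2^m}=X_t$ and $\tilde X^{(m)}_{2^m}=\tilde X_t$ for every $m$. The maxima converge: since the dyadic grids are nested and their union is dense in $[0,t]$ with $t$ belonging to every grid, right-continuity of the sample paths gives $M^{(m)}_{2^m}=\max_{0\leq k\leq 2^m}X_{t_k}\uparrow M_t$ almost surely, and similarly $\tilde M^{(m)}_{2^m}\uparrow\tilde M_t$ almost surely. Hence both sides of the displayed identity converge almost surely, a fortiori in distribution, to $\big(M_t-X_t,X_t\big)$ and $\big(\tilde M_t,-\tilde X_t\big)$ respectively. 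Since a distributional identity is preserved under convergence in distribution, the lemma follows. Note that no integrability or skew-symmetry hypotheses enter, consistent with the statement that the result holds for an arbitrary L\'evy process.

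The one step requiring care---and the main obstacle---is the almost sure convergence of the discrete maxima to the continuous supremum. I would justify it by the elementary fact that for a right-continuous function on $[0,t]$ the supremum is already recovered as the supremum over any dense subset containing $t$: given $u\in[0,t)$, right-continuity produces grid points $d_n\downarrow u$ with $X_{d_n}\to X_u$, while the value at $t$ is captured because $t$ lies in every grid; the nestedness of the dyadic partitions then makes $M^{(m)}_{2^m}$ monotone in $m$, so the convergence is in fact monotone. Everything else is routine.

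I would also remark that a direct continuous-time time-reversal argument is available: the reversed process $X_t-X_{(t-s)-}$, $0\leq s\leq t$, has the same law as $(X_s)_{0\leq s\leq t}$, and applying this to the running maximum reproduces the identity. That route, however, forces one to reconcile the running infimum of the path with that of its left limits at jump times, a bookkeeping nuisance that the discretization neatly sidesteps, which is why I prefer the skeleton approach.
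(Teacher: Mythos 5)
Your proof is correct, but it follows a genuinely different route from the paper's. The paper works directly in continuous time: it defines the reversed process $X_s':=X_{(t-s)-}-X_{t-}$ for $0\leq s<t$ (with $X_t':=-X_{t-}$), invokes Proposition 41.8 of Sato to identify the law of $(X_s':0\leq s\leq t)$ with that of the dual $(\tilde{X}_s:0\leq s\leq t)$, and then uses continuity in probability (so $X_{t-}=X_t$ a.s.) together with the rcll property (so $\sup_{0\leq s\leq t}X_s=\sup_{0<s\leq t}X_{s-}$ a.s.) to rewrite $\rP(M_t-X_t\leq z,\,X_t\leq x)$ as $\rP(M_t'\leq z,\,-X_t'\leq x)$ --- precisely the left-limit bookkeeping you mention avoiding. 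You instead apply the discrete identity \eqref{eq:reflection} to nested dyadic skeletons and pass to the limit: the endpoints are exact since $t$ lies in every grid, and the skeleton maxima increase a.s. to $M_t$ and $\tilde{M}_t$ by right-continuity and density of the dyadic points, so the identity survives the distributional limit. One point worth making explicit: \eqref{eq:reflection} appears in Section \ref{sec:random-walk} under the standing skew-symmetry assumption $\xi_1\geq_{\st}-\xi_1$, but its time-reversal proof never uses that assumption, so your appeal to it for the skeleton of an arbitrary L\'evy process is legitimate --- a sentence noting this would tighten the argument. What each approach buys: yours is more elementary and self-contained (only the discrete identity, rcll paths, and monotone convergence of maxima), while the paper's is shorter given the citation and operates through a single path-level transformation rather than an extra limiting step. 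Both are valid proofs of the lemma.
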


\begin{proof}
This is essentially a know fact. Let $I_t:=\inf\{X_s: 0\leq s\leq t\}$. Then plainly
\begin{equation*}
(\tilde{M}_t,-\tilde{X}_t) \stackrel{d}{=} (-I_t,X_t).
\end{equation*}
According to Proposition 3 of Bertoin \cite[p.~158]{Bertoin},
\begin{equation*}
(-I_t,X_t-I_t)\stackrel{d}{=}(M_t-X_t,M_t),
\end{equation*}
which is equivalent to
\begin{equation*}
(-I_t,X_t)\stackrel{d}{=}(M_t-X_t,X_t).
\end{equation*}
Thus, the Lemma follows.
\end{proof}

\begin{proof}[Proof of Theorem \ref{thm:Levy-finite}] 
Assume for the moment that $X$ is RSS. Recall the representation \eqref{eq:path-representation}. On the same probability space on which the process $X$ is defined, we construct the dual $\tilde{X}$ as follows. For each $i\in\NN$, we can construct out of $\xi_i$ (using an external randomization if necessary) a random variable $\tilde{\xi}_i$ such that $\tilde{\xi}_i\stackrel{d}{=}-\xi_i$, and $\xi_i\geq\tilde{\xi}_i$ pointwise. Now put
\begin{equation*}
\tilde{X}_t:=-bt+\sigma B_t+\sum_{i=1}^{N(t)}\tilde{\xi}_i, \qquad t\geq 0.
\end{equation*}
Then it is easy to see that $(\tilde{X}_t)_{t\geq 0}\stackrel{d}{=}(-X_t)_{t\geq 0}$, and moreover, the processes $X$ and $\tilde{X}$ satisfy the property that, for all $0\leq s<t$ and for all $\omega\in\Omega$,
\begin{equation}
X_t(\omega)-X_s(\omega)\geq \tilde{X}_t(\omega)-\tilde{X}_s(\omega).
\label{eq:increment-domination}
\end{equation}
For $t\geq 0$, define
\begin{equation*}
Z_t:=M_t-X_t, \qquad \tilde{Z}_t:=\tilde{M}_t-\tilde{X}_t.
\end{equation*}
As in Section \ref{sec:random-walk}, it follows from \eqref{eq:increment-domination} that 
\begin{equation*}
M_t\geq\tilde{M}_t \qquad\mbox{and}\qquad Z_t\leq\tilde{Z}_t \qquad\mbox{for all $t\geq 0$}.
\end{equation*}
Using these relationships and Lemma \ref{lem:Levy-reflection}, we can show in exactly the same way as in the proof of Lemma \ref{lem:key-inequality}, that
\begin{equation*}
\sE[f(z\vee M_t-X_t)]\geq \sE\big[f\big(z\vee (M_t-X_t)\big)\big]
%\label{eq:Levy-key-inequality}
\end{equation*}
for all $t\geq 0$ and all $z\geq 0$.

Next, for $t\geq 0$, let $\GG_t$ be the smallest $\sigma$-algebra containing both $\FF_t$ and $\sigma(\{\tilde{X}_s:0\leq s\leq t\})$. Then $\{\GG_t\}_{t\geq 0}$ is a filtration with respect to which both $X$ and $\tilde{X}$ are adapted, and for each $0\leq s\leq t$, both $X_t-X_s$ and $\tilde{X}_t-\tilde{X}_s$ are independent of $\GG_s$. The rest of the proof is now the same (modulo subscript notation) as the proof of Theorem \ref{thm:random-walk}, where the analogs of \eqref{eq:first-conditional} and \eqref{eq:second-conditional} follow since $X$, being a L\'evy process, obeys the strong Markov property.
\end{proof}

\begin{question}
{\rm
It is clear that when $X$ is RSS, we have $X_t\geq_{\st}\tilde{X}_t$ for all $t\geq 0$. Does the converse of this statement hold?
}
\end{question}

\subsection{The general case} \label{subsec:general}

For a general L\'evy process with nonfinite L\'evy measure $\nu$, the construction of the previous subsection is no longer possible because the jump times are dense in the time interval $[0,T]$. Here we shall use the fact that a general L\'evy process on $[0,T]$ can always be obtained as the almost sure uniform limit of a sequence of processes of the form \eqref{eq:path-representation}. However, in order to ensure that this can be done while preserving the uniform domination of increments (i.e. \eqref{eq:increment-domination}), some extra conditions appear to be needed. Let the L\'evy-Khintchine representation of $X=(X(t))_{t\geq 0}$ be given by \eqref{eq:LK-representation-2}. (In what follows, it will be notationally more convenient to write $X(t)$ instead of $X_t$.)

\begin{definition}
We say $X$ is {\em balanced in its small jumps (BSJ)}, if
\begin{equation}
L:=\lim_{\eps\downarrow 0}\int_{\eps\leq |y|<1}y\nu(dy) \quad\mbox{exists and is finite}.
\label{eq:almost-symmetric}
\end{equation}
\end{definition}

This condition is always satisfied when $\nu$ is symmetric on a sufficiently small interval $(-\eps,\eps)$ where $\eps>0$, or when  $\int_{0<|y|<1}|y|\nu(dy)<\infty$. (In the latter case, the non-Gaussian part of $X$ has finite variation.) In the case when $\int_{0<|y|<1}|y|\nu(dy)=\infty$, \eqref{eq:almost-symmetric} may be interpreted as saying that $\nu$ is {\em almost} symmetric in a sufficiently small neighborhood of the origin. Roughly speaking, this means that we allow the small jumps of the process to be dense in time, provided that the positive and negative jumps more or less balance each other. It allows us to still think of the number $\gamma-L$ as the `drift' of the process.

It is clear that if $X$ is BSJ, then so is its dual $\tilde{X}$.

Denote by $\tilde{\nu}$ the dual measure of $\nu$, so that $\tilde{\nu}(A)=\nu(-A)$ for $A\subset\RR$. If $\mu$ and $\nu$ are measures on $\RR$ and $E\subset\RR$, we say $\mu$ {\em majorizes} $\nu$ on $E$ if $\mu(F)\geq\nu(F)$ for every $F\subset E$.

\begin{definition} \label{def:SRSS}
Let $X=(X(t))_{t\geq 0}$ be a L\'evy process.%\vspace{-2mm}
\begin{enumerate}[(i)]%\setlength{\itemsep}{-1mm}
\item We say $X$ is {\em strongly right skew symmetric (SRSS)} if all of the following hold:%\vspace{0mm}
\begin{enumerate}
\item $X$ is balanced in its small jumps;
\item $\gamma\geq L$, where $L$ is the limit in \eqref{eq:almost-symmetric};
\item $\nu\big((a,\infty)\big)\geq\nu\big((-\infty,-a)\big)$ for all $a>0$;
\item There exists $\eps>0$ such that $\nu$ majorizes $\tilde{\nu}$ on $(0,\eps)$.
\end{enumerate}
\item We say $X$ is {\em strongly left skew symmetric (SLSS)} if  $\tilde{X}$ is SRSS.
\item We say $X$ is {\em symmetric} if $\gamma=0$ and $\nu=\tilde{\nu}$.
\end{enumerate}
\end{definition}

\begin{remark}
{\rm
({\em a}) If $X$ is symmetric, then it is both SRSS and SLSS, since \eqref{eq:almost-symmetric} holds with $L=0$.

({\em b}) If $X$ is SRSS (resp. SLSS) and $\nu$ is finite, then $X$ is RSS (resp. LSS), since $b=\gamma-L$. The undesirable fourth condition in the definition of SRSS seems to be needed in order to carry out the pathwise construction of $X$ and its dual, below. At this point, the author does not see how to get around this technical difficulty, except in the special case when $f$ is bounded and continuous (see Subsection \ref{subsec:bounded} below).

({\em c}) The SRSS and SLSS conditions can be made more concrete in case $\nu$ has a density. Let $f,g: (0,\infty)\to[0,\infty)$ and suppose that
\begin{equation*}
\nu(dx)=\left(f(x)\chi_{(0,\infty)}(x)+g(-x)\chi_{(-\infty,0)}(x)\right)dx.
\end{equation*}
Then $\nu$ is a L\'evy measure if and only if $\int_0^\infty (x^2 \wedge 1)[f(x)+g(x)]\,dx<\infty$. The BSJ condition is now equivalent to convergence of the integral $\int_0^1 x[f(x)-g(x)]\,dx$. Conditions (c) and (d) in the definition of SRSS become, respectively

\bigskip
(c)'\ \ $\int_a^\infty [f(x)-g(x)]\,dx\geq 0$ for all $a>0$.

\medskip
(d)'\ \ There is $\eps>0$ such that $f(x)\geq g(x)$ for all $x\in(0,\eps)$.

\bigskip
The easiest way to satisfy both (c)' and (d)' is, of course, to take $f\geq g$ everywhere. This way, we may obtain nontrivial examples of nonsymmetric L\'evy processes that are SRSS (or SLSS). For instance, let
\begin{equation*}
f(x)=\frac{c}{x^p}, \qquad g(x)=\frac{c}{x^p+x^r}, \qquad\mbox{where} \quad c>0, \quad 2\leq p<3, \quad r>2p-2.
\end{equation*}
Then $r>p$, and $\nu$ satisfies \eqref{eq:almost-symmetric} with
\begin{equation*}
L=\int_0^1 x[f(x)-g(x)]\,dx=\int_0^1 \frac{cx^{r-2p+1}}{1+x^{r-p}}\,dx,
\end{equation*}
a convergent integral. Since (c)' and (d)' are obviously satisfied, the process will be SRSS if $\gamma\geq L$. Since $r>p$, the `large' positive jumps of $X(t)$ tend to be greater in magnitude (and occur more frequently) than the `large' negative jumps. On the other hand, the small jumps of the process in either direction are comparable in size.
}
\end{remark}

\begin{example} \label{ex:stable}
{\rm
({\em Stable processes}\,)
Let $X$ be a stable L\'evy process with index of stability $\alpha$ ($0<\alpha\leq 2$). If $\alpha=2$, then $X$ is just a Brownian motion with drift, and the optimal rule is already specified by Theorem \ref{thm:Levy-finite}. (In fact, in this case the optimal rules are unique except for some trivial cases; see Allaart \cite{Allaart}.)

If $\alpha<2$, then $\sigma=0$ and the L\'evy measure $\nu$ is of the form
\begin{equation*}
\nu(dx)=\left(\frac{c_1}{x^{1+\alpha}}\chi_{(0,\infty)}(x)+\frac{c_2}{|x|^{1+\alpha}}\chi_{(-\infty,0)}(x)\right)dx,
\end{equation*}
where $c_1\geq 0$, $c_2\geq 0$, and $c_1+c_2>0$ (see, e.g. Sato \cite{Sato}, p.~80). If follows that if $1\leq\alpha<2$, then $X$ is BSJ if and only if $c_1=c_2$, in which case $\nu$ is symmetric. In that case, $X$ is SRSS if $\gamma\geq 0$, and $X$ is SLSS if $\gamma\leq 0$. On the other hand, if $0<\alpha<1$, then the BSJ condition \eqref{eq:almost-symmetric} is always satisfied, with
\begin{equation*}
L=\int_{0<|x|<1}x\nu(dx)=\frac{c_1-c_2}{1-\alpha},
\end{equation*}
and $X$ is SRSS if $\gamma\geq L\geq 0$; or similarly, $X$ is SLSS if $\gamma\leq L\leq 0$.

Note that in the stable case, condition (d) in Definition \ref{def:SRSS} is satisfied whenever (a)-(c) are.
}
\end{example}

\begin{example} \label{ex:CGMY}
{\rm
({\em CGMY processes}\,)
Another example of nonsymmetric processes that are SRSS or SLSS is given by the CGMY processes, which are frequently used in financial modeling. The CGMY process, named for Carr, Geman, Madan and Yor (see \cite{CGMY}), is a L\'evy process with L\'evy measure
\begin{equation*}
\nu(dx)=C\cdot\frac{e^{-G|x|}\chi_{(-\infty,0)}(x) + e^{-Mx}\chi_{(0,\infty)}(x)}{|x|^{1+Y}},
\end{equation*}
where $C>0$, $G\geq 0$, $M\geq 0$ and $Y<2$, and it is assumed that $G>0$ and $M>0$ if $Y\leq 0$. The CGMY processes include the symmetric stable processes (take $G=M=0$) and are sometimes called {\em tempered stable processes}. The CGMY process with $Y=0$ is known as the {\em variance gamma process}. The very small jumps of a CGMY process behave essentially as in the symmetric stable case, and it is easy to check that all CGMY processes have the BSJ property. Furthermore, conditions (c) and (d) in Definition \ref{def:SRSS} are satisfied if and only if $M\leq G$. Hence, the CGMY process is SRSS if $M\leq G$ and $\gamma\geq L$, with $L$ as in \eqref{eq:almost-symmetric}; and it is SLSS if $M\geq G$ and $\gamma\leq L$.
}
\end{example}

%\begin{remark}
%{\rm
%If $\sigma=0$ and $\int_{|y|<1}|y|\nu(dy)<\infty$, then $X$ has finite variation, so there exist stopping times $\tau$ for which $\rP\big(X(\tau)=M(\tau)\big)>0$. Thus, the conclusion of Theorem \ref{thm:Levy-general} is meaningful even for $f=\chi_0$.
%}
%\end{remark}

We can now state the result for the most general case.

\begin{theorem} \label{thm:Levy-general}
Let $X=(X(t))_{t\geq 0}$ be a L\'evy process, and let $f$ be as in Theorem \ref{thm:random-walk}. For fixed $T>0$, consider the problem \eqref{eq:Levy-objective}.%\vspace{-2mm}
\begin{enumerate}[(i)]%\setlength{\itemsep}{-1mm}
\item If $X$ is SRSS, the rule $\tau\equiv T$ is optimal.
\item If $X$ is SLSS, the rule $\tau\equiv 0$ is optimal.
\item If $X$ is symmetric, any rule $\tau$ satisfying $\rP\big(X(\tau)=M(\tau)\ \mbox{or}\ \tau=T\big)=1$ is optimal.
\end{enumerate}
\end{theorem}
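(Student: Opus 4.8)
The plan is to reduce part (i) to the finite-measure result of Theorem~\ref{thm:Levy-finite} by approximation, the real content being to couple $X$ with its dual $\tilde X$ on one probability space so that the increment-domination property \eqref{eq:increment-domination} survives even though the small jumps are dense in $[0,T]$. Once \eqref{eq:increment-domination} is available, everything downstream is a verbatim repeat of Theorem~\ref{thm:Levy-finite}: it yields $M_t\geq\tilde M_t$ and $Z_t\leq\tilde Z_t$, which together with the reflection identity of Lemma~\ref{lem:Levy-reflection} and the convexity of $f$ reproduces the proofs of Lemma~\ref{lem:key-inequality} and Corollary~\ref{cor:consequence} (neither of which ever used finiteness of $\nu$), giving $\sE[f(z\vee M_t-X_t)]\geq\sE[f(z\vee M_t)]$; the strong-Markov stopping argument with $G(t,z)=\sE[f(z\vee M_t)]$ and $D(t,z)=\sE[f(z\vee M_t-X_t)]$ then forces $\tau\equiv T$ to be optimal. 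I would treat only the SRSS case; part (ii) follows by applying (i) to $\tilde X$ as in Theorem~\ref{thm:random-walk}(ii), and part (iii) because a symmetric $X$ is both SRSS and SLSS, together with the boundary identities $G(0,z)=f(z)=D(0,z)$ and $G(t,0)=D(t,0)$ exactly as in Theorem~\ref{thm:random-walk}(iii).

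For the coupling I would first fix $\eps$ from condition (d) and approximate $X$ by the truncated processes $X^{(m)}$ of the form \eqref{eq:path-representation} keeping only jumps of size $\geq 1/m$, with drift $b^{(m)}=\gamma-\int_{1/m\leq|y|<1}y\,\nu(dy)$. The point is that each $X^{(m)}$ is RSS once $1/m<\eps$: the tail inequality (c) gives the required step condition $\xi^{(m)}\geq_{\st}-\xi^{(m)}$, while
\[
b^{(m)}=(\gamma-L)+\int_{0<|y|<1/m}y\,\nu(dy)\geq 0,
\]
since $\gamma-L\geq 0$ by (b) and $\int_{0<|y|<1/m}y\,\nu(dy)\geq 0$ because the majorization (d) forces $\int_0^{1/m}y\,\nu(dy)\geq\int_{-1/m}^0|y|\,\nu(dy)$. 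I would then realize $X^{(m)}$ and its dual $\tilde X^{(m)}$ from a single pair of coupled Poisson random measures with a shared Gaussian part, coupling the large jumps ($|y|\geq\eps$) by pointwise domination as in Theorem~\ref{thm:Levy-finite} (using (c)), and the small jumps ($1/m\leq|y|<\eps$) by thinning: (d) lets me realize the small positive jumps of $\tilde X^{(m)}$ as a sub-family of those of $X^{(m)}$ and, dually, the small negative jumps of $X^{(m)}$ as a sub-family of those of $\tilde X^{(m)}$. Every matched jump cancels and every unmatched jump pushes the increment difference upward, so \eqref{eq:increment-domination} holds for $(X^{(m)},\tilde X^{(m)})$.

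Next I would pass to the limit. Condition (a) (BSJ) is exactly what makes the compensators $\int_{1/m\leq|y|<1}y\,\nu(dy)$ converge to $L$, so that $X^{(m)}\to X$ and $\tilde X^{(m)}\to\tilde X$ almost surely, uniformly on $[0,T]$, by the standard L\'evy-process approximation. Since the coupling was built consistently from one pair of Poisson random measures, the pathwise limit is a genuine coupling of $X$ with its dual $\tilde X$, and because \eqref{eq:increment-domination} is a closed condition it is inherited by $(X,\tilde X)$. This places me exactly in the situation of the proof of Theorem~\ref{thm:Levy-finite}, and I would finish as indicated in the first paragraph.

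The main obstacle is the small-jump coupling of the second paragraph: for an infinite-activity $\nu$ one cannot pair jumps one at a time, so the thinning has to be carried out at the level of Poisson random measures, and it is precisely here that all four SRSS conditions are consumed---(d) to run the thinning and to keep $\int_{0<|y|<1/m}y\,\nu(dy)\geq 0$, (b) to keep the residual drift $b^{(m)}$ nonnegative, (c) to dominate the large jumps, and (a) to guarantee that the approximating drifts converge so that the limit process is indeed $X$. I would expect the verification that the matched-plus-thinned construction yields \eqref{eq:increment-domination} at each finite level $m$, so that the inequality is inherited in the limit, to be the most delicate bookkeeping in the argument.
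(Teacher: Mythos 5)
Your proposal is correct and follows the same overall architecture as the paper: the entire content is an increment-dominating coupling of $X$ with its dual $\tilde X$ (the paper's Lemma \ref{lem:general-construction}), obtained as an almost-sure uniform limit of coupled finite-activity approximations, after which the proof of Theorem \ref{thm:Levy-finite} is repeated verbatim; the four SRSS conditions are consumed in exactly the roles you assign them. Where you differ is in the mechanics of the small-jump coupling. The paper decomposes the small jumps into annuli $\{\eps_n\le|y|<\eps_{n-1}\}$, observes that condition (d) makes each annulus-restricted measure inherit the tail-domination property (which condition (c) alone would not guarantee for a band), and couples each resulting compound Poisson process with its dual by the finite-measure recipe: identical jump times, jump sizes dominated pathwise. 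You instead couple at the level of Poisson random measures by thinning: since (d) says precisely that $\tilde\nu\le\nu$ as measures on $(0,\eps)$, the positive small jumps of $\tilde X$ can be realized as a sub-family of those of $X$, and dually on the negative side; matched jumps cancel and unmatched jumps only push $X-\tilde X$ upward. Both couplings are valid and use (d) in the same essential way; the paper's version buys direct reuse of the finite-$\nu$ construction (no Radon--Nikodym/thinning formalism for random measures), while yours buys a single global construction that is automatically consistent across truncation levels, something the paper achieves instead by summing the band processes $Y_1,\dots,Y_n$.

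Two loose ends, neither fatal. First, BSJ by itself gives only convergence of the compensators $\int_{1/m\le|y|<1}y\,\nu(dy)\to L$; the almost-sure uniform convergence $X^{(m)}\to X$ on $[0,T]$ additionally requires the truncation levels to decrease fast enough, as in the paper's condition \eqref{eq:epsilon-condition} (or a subsequence extraction), so you should choose the cutoffs along such a sequence rather than taking all levels $1/m$. Second, part (ii) does not literally follow by ``applying (i) to $\tilde X$'': the conclusion of (i) for $\tilde X$ concerns $\sE[f(\tilde M_T-\tilde X_\tau)]$, not $\sE[f(M_T-X_\tau)]$. What is actually needed is the reversed coupling together with the chain of inequalities \eqref{eq:chain} from the random-walk proof (monotonicity of $G(k,\cdot)$, the inequality $\tilde D\ge G$, and the reflection identity); your parenthetical ``as in Theorem \ref{thm:random-walk}(ii)'' indicates you have the right argument in mind, but the deduction as stated is not valid on its own.
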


The proof of Theorem \ref{thm:Levy-general} hinges on the following construction. Once this is accomplished, the rest of the proof is the same as before.

\begin{lemma} \label{lem:general-construction}
Let $X$ be a SRSS L\'evy process. Then, on a suitable probability space $(\Omega,\FF,\sP)$, we can construct $X$ and its dual $\tilde{X}$ in such a way that there exists a set $\Omega_0\subset \Omega$ with $\rP(\Omega_0)=1$ such that, for all $0\leq s<t$ and for all $\omega\in\Omega_0$,
\begin{equation}
X(t;\omega)-X(s;\omega)\geq \tilde{X}(t;\omega)-\tilde{X}(s;\omega).
\label{eq:increment-domination-general}
\end{equation}
\end{lemma}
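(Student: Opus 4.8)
The first thing I would observe is that the desired pathwise inequality \eqref{eq:increment-domination-general} is equivalent to the statement that the difference process $D(t):=X(t)-\tilde{X}(t)$ is nondecreasing on $[0,T]$ for almost every $\omega$. So the whole problem reduces to constructing $X$ and its dual $\tilde{X}$ on a common space so that $D=X-\tilde{X}$ is a.s.\ nondecreasing. Since $X$ and $\tilde{X}$ will share the same Gaussian component $\sigma B$, that part cancels in $D$, and it remains only to arrange the drift and the jumps so that $D$ has nonnegative drift and exclusively nonnegative jumps.

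The plan is to realize the jump structure of the coupled pair through Poisson random measures, splitting the jumps at a level $\eps_0>0$ for which condition (d) of Definition \ref{def:SRSS} holds. For the large jumps ($|y|\geq\eps_0$) the measures $\nu$ and $\tilde{\nu}$ are finite and carry equal total mass, so I would drive both processes by a common Poisson clock and, at each jump time, draw a pair $(\xi,\tilde{\xi})$ from a coupling of the normalized restrictions of $\nu$ and $\tilde{\nu}$ to $\{|y|\geq\eps_0\}$ with $\xi\geq\tilde{\xi}$ almost surely. Condition (c) is exactly what guarantees $\nu|_{\{|y|\geq\eps_0\}}\geq_{\st}\tilde{\nu}|_{\{|y|\geq\eps_0\}}$ (choosing $\eps_0$ to avoid the atoms of $\nu$ at $\pm\eps_0$), so such a coupling exists and contributes $\sum(\xi_i-\tilde{\xi}_i)\geq0$, a nondecreasing term, to $D$. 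This is precisely the device used in the finite-measure case of the previous subsection.

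For the small jumps ($0<|y|<\eps_0$) I would use condition (d), which after reflection gives $\nu\geq\tilde{\nu}$ on $(0,\eps_0)$ and $\tilde{\nu}\geq\nu$ on $(-\eps_0,0)$. This lets me split the small-jump intensities into a common part and an excess part: write $\nu|_{(0,\eps_0)}=\tilde{\nu}|_{(0,\eps_0)}+\rho_+$ and $\tilde{\nu}|_{(-\eps_0,0)}=\nu|_{(-\eps_0,0)}+\sigma_-$ with $\rho_+,\sigma_-\geq0$. The common positive part (intensity $\tilde{\nu}|_{(0,\eps_0)}$) and the common negative part (intensity $\nu|_{(-\eps_0,0)}$) are driven by a single Poisson random measure shared by $X$ and $\tilde{X}$; these jumps occur at the same times and with the same sizes in both processes, so they cancel identically in $D$, and crucially so do their compensators. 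What survives in $D$ from the small jumps is only the excess: the positive excess jumps of $X$ (intensity $\rho_+$) and the negative excess jumps of $\tilde{X}$ (intensity $\sigma_-$), both contributing nonnegatively. The balance condition (a) is what makes these excess measures integrable: a direct computation gives $\int_{(0,\eps_0)}y\,\rho_+(dy)=\int_{(-\eps_0,0)}|y|\,\sigma_-(dy)=\int_{0<|y|<\eps_0}y\,\nu(dy)$, which is finite and nonnegative by BSJ together with (d), so the excess small jumps form genuine finite-variation processes requiring no compensation.

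It then remains to track the drift. Collecting the $t$-coefficients and the compensators of the excess jumps, the drift of $D$ works out to $2\big(\gamma-\int_{0<|y|<1}y\,\nu(dy)\big)=2(\gamma-L)$, which is nonnegative by condition (b); the potentially divergent contributions of the common small jumps have dropped out because they cancel in $D$. Since $D$ then has nonnegative drift, nonnegative jumps, and no Gaussian part, it is nondecreasing, which is \eqref{eq:increment-domination-general}. To make this rigorous I would proceed through the approximation already advertised in the text: truncate the small jumps at levels $\eps\downarrow0$ to obtain processes $X^{(\eps)},\tilde{X}^{(\eps)}$ of the form \eqref{eq:path-representation}, carry out the coupling above at each level, and pass to the limit using the a.s.\ uniform convergence $X^{(\eps)}\to X$ and $\tilde{X}^{(\eps)}\to\tilde{X}$ on $[0,T]$. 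I expect the small-jump regime to be the main obstacle. A naive symmetric truncation does not suffice, because the truncated drift $\gamma-\int_{\eps\leq|y|<1}y\,\nu(dy)$ can overshoot its limit and be negative for small $\eps$, so the approximating difference need not be monotone; the point of the common/excess decomposition is to cancel exactly the problematic (and possibly nonconvergent) compensator mass, leaving the stable quantity $2(\gamma-L)\geq0$. Verifying this cancellation and the finite-variation convergence of the excess jumps, while keeping the marginal laws of $X$ and $\tilde{X}$ correct, is the technical heart of the argument, and it is precisely here that conditions (a), (c) and (d) are used.
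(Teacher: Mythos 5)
Your construction is correct -- I checked the two computations on which it rests, and both hold: under conditions (a) and (d) the excess measures $\rho_+,\sigma_-$ have finite first moment equal to the principal value $\lim_{\delta\downarrow 0}\int_{\delta<|y|<\eps_0}y\,\nu(dy)$, and the drift of $D=X-\tilde X$ then works out to $2(\gamma-L)\geq 0$. But your route is genuinely different from the paper's. The paper never splits the small-jump intensity into common and excess parts: it writes $X=Y_1+Y_2+\cdots$, where $Y_1$ carries the Gaussian part, the drift $\gamma-L$, and the jumps with $|y|\geq\eps$, while each $Y_n$, $n\geq 2$, is the compound Poisson process of jumps in an annulus $\{\eps_n\leq|y|<\eps_{n-1}\}$. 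Condition (d) is used there to show that each annulus restriction $\nu_n$ stochastically dominates its reflection $\tilde\nu_n$, so each pair $(Y_n,\tilde Y_n)$ can be coupled by the finite-measure device of Subsection \ref{subsec:interlacing} (common Poisson clock, jump sizes $\xi_i\geq\tilde\xi_i$); the partial sums $X_n$ then dominate their duals incrementwise, and the lemma follows by passing to the almost-sure uniform limit (via Theorem 2.6.2 of Applebaum, with BSJ giving $\gamma_n\to\gamma'$ and \eqref{eq:epsilon-condition} controlling the small-jump variance). Your decomposition instead uses (d) as setwise majorization, shares the common symmetric part (and its compensator) identically between $X$ and $\tilde X$ so that it cancels exactly in $D$, and reads off monotonicity of $D$ from an explicit formula: $2(\gamma-L)t$ plus nondecreasing finite-variation jump processes. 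What this buys is conceptual transparency and the elimination of the annulus chopping: no limiting procedure is needed for the coupling itself, only the standard compensated-integral construction of the common part. What the paper's approach buys is that it reuses the finite-measure theorem verbatim and makes the marginal laws automatic, whereas in your construction one must still verify that $\mu_c+\rho_++\nu|_{\{|y|\geq\eps_0\}}=\nu$ and that the drifts reassemble the correct L\'evy--It\^o decompositions -- precisely the bookkeeping behind $2\gamma'-2m_+=2(\gamma-L)$. One caution about your final paragraph: if you do route rigor through truncation at levels $\delta\downarrow 0$, the truncated processes must be taken in the uncompensated form \eqref{eq:path-representation} with drifts $\pm(\gamma-L)$ (as the paper does), not with the natural truncated L\'evy--It\^o drift and compensated excess jumps; in the latter setup the drift of the truncated difference is $2\bigl(\gamma-\int_{\delta<|y|<1}y\,\nu(dy)\bigr)$, which is exactly the possibly-negative quantity you warn against, so the approximating differences need not be monotone even with your common/excess coupling.
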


\begin{proof}
Let $\eps>0$ be as in the definition of SRSS. Then $X(t)$ can be expressed by the {\em L\'evy-Ito decomposition}
\begin{equation*}
X(t)=\gamma' t+\sigma B(t)+\int_{|y|<\eps} yN'(t,dy)+\int_{|y|\geq\eps}yN(t,dy),
%\label{eq:Levy-Ito}
\end{equation*}
where $B(t)$ is a standard Brownian motion on $\RR$, $\gamma':=\gamma-\int_{\eps\leq|y|<1}y\nu(dy)$, $(N(t,\cdot))_{t\geq 0}$ is a Poisson random measure with intensity measure $\nu$ which is independent of the Brownian motion, and $N'(t,\cdot)$ is defined by
\begin{equation*}
N'(t,dy)=N(t,dy)-t\nu(dy), \qquad t\geq 0.
\end{equation*}
In general, the integrals $\int_{|y|<\eps} yN(t,dy)$ and $\int_{|y|<\eps} y\nu(dy)$ need not converge, but the `compensated sum of small jumps', $\int_{|y|<\eps} yN'(t,dy)$, always does.

Now we will construct a sequence of L\'evy processes $Y_1,Y_2,\dots$ and their duals $\tilde{Y}_1,\tilde{Y}_2,\dots$, as follows. Let $\eps=\eps_1>\eps_2>\dots$ be a sequence of numbers decreasing to zero. Define first
\begin{equation*}
Y_1(t):=(\gamma-L)t+\sigma B(t)+\int_{|y|\geq \eps}yN(t,dy).
\end{equation*}
Then $Y_1$ has finite L\'evy measure $\nu_1$, where $\nu_1$ is the restriction of $\nu$ to the set $\{y:|y|\geq\eps\}$. Clearly $\nu_1\big((a,\infty)\big)\geq\nu_1\big((-\infty,-a)\big)$ for all $a>0$, since $\nu_1$ simply inherits this property from $\nu$. Since $\gamma\geq L$, we can construct $Y_1$ and its dual $\tilde{Y}_1$ on the same probability space so that these processes satisfy the increment property \eqref{eq:increment-domination}. Next, for $n\geq 2$, let
\begin{equation*}
Y_n(t)=\int_{\eps_n\leq|y|<\eps_{n-1}}yN(t,dy),
\end{equation*}
and note that by the usual independence property of Poisson point processes, the processes $Y_n$, $n\in\NN$ may be constructed independently of each other. Now for each $n\geq 2$, $Y_n$ is a compound Poisson process with (finite) L\'evy measure $\nu_n$, where $\nu_n$ is the restriction of $\nu$ to the set $\{y:\eps_n\leq|y|<\eps_{n-1}\}$. Since $\nu$ majorizes $\tilde{\nu}$ on $(0,\eps)$, it follows that $\nu_n\big((a,\infty)\big)\geq\tilde{\nu}_n\big((a,\infty)\big)$ for all $n\geq 2$. (Note that this fact would not be guaranteed without the fourth condition in the definition of SRSS.) Thus, we can construct $Y_n$ and its dual $\tilde{Y}_n$ together as in the previous subsection in such a way that these processes satisfy \eqref{eq:increment-domination}.

Finally, put
\begin{equation*}
X_n(t):=Y_1(t)+\dots+Y_n(t), \qquad \tilde{X}_n(t):=\tilde{Y}_1(t)+\dots+\tilde{Y}_n(t)
\end{equation*}
for $n\in\NN$, so that $\tilde{X}_n$ is the dual of $X_n$. Since the property \eqref{eq:increment-domination} is clearly preserved under addition of two or more processes, we have that, for all $0\leq s<t$,
\begin{equation}
X_n(t)-X_n(s)\geq\tilde{X}_n(t)-\tilde{X}_n(s)
\label{eq:partial-increment}
\end{equation}
pointwise on $\Omega$. Finally, note that $X_n(t)$ can be written as
\begin{align*}
X_n(t)&=(\gamma-L)t+\sigma B(t)+\int_{|y|\geq\eps_n}yN(t,dy)\\
&=\gamma_n t+\sigma B(t)+\int_{\eps_n\leq|y|<\eps}yN'(t,dy)+\int_{|y|\geq \eps}yN(t,dy),
\end{align*}
where
\begin{equation*}
\gamma_n:=\gamma-L+\int_{\eps_n\leq|y|<\eps}y\nu(dy).
\end{equation*}
By \eqref{eq:almost-symmetric}, $\gamma_n\to\gamma'$, and it follows from Theorem 2.6.2 in \cite{Applebaum} that $X_n(t)\to X(t)$ uniformly in $[0,T]$ with probability one, as long as the sequence $\{\eps_n\}$ decreases fast enough so that
\begin{equation}
\int_{0<|y|<\eps_n}y^2\nu(dy)\leq\frac{1}{8^n}
\label{eq:epsilon-condition}
\end{equation}
for every $n$. Similarly, $\tilde{X}_n(t)\to\tilde{X}(t)$ uniformly in $[0,T]$ with probability one. And, by taking limits in \eqref{eq:partial-increment}, we see that $X$ and $\tilde{X}$ satisfy \eqref{eq:increment-domination-general} everywhere on the set on which both processes converge. 
\end{proof}

\subsection{The case of bounded and continuous $f$} \label{subsec:bounded}

In general, it seems difficult to eliminate the unnatural condition (d) in the definition of SRSS, except when the reward function $f$ is bounded and continuous on $[0,\infty)$. This case includes, for instance, the natural reward function $f(x)=e^{-\sigma x}$ with $\sigma>0$.

Say a general L\'evy process $X=(X(t))_{t\geq 0}$ with L\'evy-Khintchine representation \eqref{eq:LK-representation-2} is {\em right skew symmetric} (RSS) if
\begin{equation}
\gamma\geq\liminf_{\delta\downarrow 0}\int_{\delta<|y|<1}y\nu(dy),
\label{eq:weaker-RSS-condition}
\end{equation}
and $\nu\big((a,\infty)\big)\geq\nu\big((-\infty,a)\big)$ for all $a>0$. Say $X$ is {\em left skew symmetric} (LSS) if $\tilde{X}$ is right skew symmetric.

\begin{theorem} \label{thm:bounded-f}
Let $X=(X(t))_{t\geq 0}$ be a L\'evy process, and let $f:[0,\infty)\to\RR$ be bounded, nonincreasing, continuous  and convex. For fixed $T>0$, consider the problem \eqref{eq:Levy-objective}. %\vspace{-2mm}
\begin{enumerate}[(i)]%\setlength{\itemsep}{-1mm}
\item If $X$ is RSS, the rule $\tau\equiv T$ is optimal.
\item If $X$ is LSS, the rule $\tau\equiv 0$ is optimal.
\end{enumerate}
\end{theorem}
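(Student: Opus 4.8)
The plan is to prove the theorem by approximating $X$ from the finite-L\'evy-measure case already settled in Theorem~\ref{thm:Levy-finite}, and then passing to the limit using the boundedness and continuity of $f$. The key observation is that the troublesome condition~(d) in the definition of SRSS was needed in Lemma~\ref{lem:general-construction} only in order to couple the \emph{small jumps} of $X$ with those of a single dual process while preserving the uniform increment domination~\eqref{eq:increment-domination}. Here I avoid coupling the small jumps of $X$ at all: instead I truncate them away, apply Theorem~\ref{thm:Levy-finite} to each finite-measure truncation (whose internal jump-by-jump coupling is unconditionally valid because the measure is finite), and then recover the conclusion for $X$ in the limit. Boundedness and continuity of $f$ are exactly what make this limiting step legitimate, and this is why condition~(d) can be dropped.

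First I would set up the approximation. Using the L\'evy--Ito decomposition on the same probability space as $X$, define for $\delta>0$ the process $X^{(\delta)}$ obtained by discarding the jumps of size less than $\delta$ (while retaining their compensator); it has \emph{finite} L\'evy measure $\nu^{(\delta)}$, namely the restriction of $\nu$ to $\{|y|\geq\delta\}$. In the compound-Poisson form~\eqref{eq:path-representation} its drift coefficient is $b_\delta=\gamma-\int_{\delta\leq|y|<1}y\,\nu(dy)$, and $\nu^{(\delta)}$ inherits the tail inequality $\nu^{(\delta)}((a,\infty))\geq\nu^{(\delta)}((-\infty,-a))$ directly from $\nu$. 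By~\eqref{eq:weaker-RSS-condition} I may choose a sequence $\delta_n\downarrow 0$, avoiding the (at most countably many) atoms of $\nu$, along which $\int_{\delta_n<|y|<1}y\,\nu(dy)$ converges to $\ell:=\liminf_{\delta\downarrow0}\int_{\delta<|y|<1}y\,\nu(dy)\leq\gamma$, and decreasing fast enough that~\eqref{eq:epsilon-condition} holds; then, exactly as in Lemma~\ref{lem:general-construction}, $X^{(\delta_n)}\to X$ uniformly on $[0,T]$ almost surely.

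Since $b_{\delta_n}\to\gamma-\ell\geq0$, each $X^{(\delta_n)}$ is RSS in the finite sense once its drift is nonnegative; but $b_{\delta_n}$ may be slightly negative in the boundary case $\ell=\gamma$, so I would first prove the theorem for the shifted process $X^c_t:=X_t+ct$ with $c>0$ fixed. Its truncations $X^{(\delta_n),c}:=X^{(\delta_n)}+ct$ have drift $b_{\delta_n}+c$, which is positive for all large $n$ because $b_{\delta_n}\to\gamma-\ell\geq0$; hence $X^{(\delta_n),c}$ is RSS with finite L\'evy measure. Fix any stopping time $\tau\leq T$ relative to the filtration $\{\FF_t\}$ generated by the driving Brownian motion and Poisson random measure, with respect to which every $X^{(\delta_n),c}$ has independent increments and of which every stopping time of the natural filtration of $X$ is also a stopping time. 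Theorem~\ref{thm:Levy-finite}(i) then gives
\begin{equation*}
\sE\big[f(M^{(n),c}_T-X^{(\delta_n),c}_\tau)\big]\leq \sE\big[f(M^{(n),c}_T-X^{(\delta_n),c}_T)\big],
\end{equation*}
where $M^{(n),c}$ denotes the supremum process of $X^{(\delta_n),c}$. Uniform a.s.\ convergence yields $M^{(n),c}_T-X^{(\delta_n),c}_\tau\to M^c_T-X^c_\tau$ and $M^{(n),c}_T-X^{(\delta_n),c}_T\to M^c_T-X^c_T$ almost surely (the convergence at the random time $\tau$ being controlled by $\sup_{t\leq T}|X^{(\delta_n),c}_t-X^c_t|\to0$). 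Since $f$ is continuous on all of $[0,\infty)$ and bounded, dominated convergence passes the inequality to $X^c$; letting $c\downarrow0$ and invoking continuity and boundedness of $f$ once more gives the inequality for $X$ itself, which proves (i). Part (ii) follows by the mirror construction, truncating an LSS process and applying Theorem~\ref{thm:Levy-finite}(ii), for which $\tau\equiv0$ is optimal.

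The main obstacle, and the only place requiring genuine care, is the drift bookkeeping in the boundary case $\gamma=\ell$: the weaker hypothesis~\eqref{eq:weaker-RSS-condition} does not by itself guarantee that any single truncation $X^{(\delta_n)}$ has nonnegative drift, only that the drifts converge to something nonnegative, and the auxiliary shift $c>0$ is what repairs this before the final $c\downarrow0$ limit. The remaining points---choosing $\delta_n$ to avoid atoms of $\nu$ and to satisfy~\eqref{eq:epsilon-condition} so that the approximation is uniform almost surely, and verifying that the two dominated-convergence passages are valid at the random argument $M_T-X_\tau$---are routine precisely because $f$ is bounded and continuous, this being the extra strength over Theorem~\ref{thm:Levy-general} that renders condition~(d) unnecessary.
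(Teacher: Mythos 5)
Your proposal is correct and follows essentially the same route as the paper's own proof: both approximate $X$ by finite-L\'evy-measure RSS processes obtained by cutting off the small jumps (with cutoffs $\eps_n$ chosen along a subsequence realizing the $\liminf$ in \eqref{eq:weaker-RSS-condition} and satisfying \eqref{eq:epsilon-condition}), apply Theorem \ref{thm:Levy-finite} to each approximant with respect to a common enlarged filtration, and pass to the limit via uniform a.s.\ convergence, continuity and boundedness of $f$, and bounded convergence. The only difference is drift bookkeeping: the paper builds the approximants as in Lemma \ref{lem:general-construction} with drift exactly $\gamma-L\geq 0$ (so they are RSS outright, the discrepancy with the natural compensated truncation vanishing in the limit), whereas you keep the natural truncation, whose drift $b_{\delta_n}$ may be slightly negative, and repair it with the auxiliary shift $+ct$ followed by a second limit $c\downarrow 0$ --- a valid, if slightly longer, way to handle the same boundary case $\gamma=L$.
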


(Observe that the symmetric case is already covered by Theorem \ref{thm:Levy-general}.)

\begin{proof}
Suppose first that $X$ is RSS. Let $L:=\liminf_{\delta\downarrow 0}\int_{\delta<|y|<1}y\nu(dy)$, and choose a sequence $\delta_1>\delta_2>\dots>0$ so that $\lim_{k\to\infty}\int_{\delta_k<|y|<1}y\nu(dy)=L$. For each $n$, choose $k_n$ so that $\eps_n:=\delta_{k_n}$ satisfies \eqref{eq:epsilon-condition}.
Now we construct the process $X$ as an almost-sure uniform limit of a sequence of processes $X_n=(X_n(t))_{t\geq 0}$, $n\in\NN$, exactly as in the proof of Lemma \ref{lem:general-construction}. Then each $X_n$ is RSS in the sense of Subsection \ref{subsec:interlacing}. (Note that in order to construct the processes $X_n$ in this way, without their duals, condition (d) in Definition \ref{def:SRSS} is not needed.) For each $t\geq 0$, let $\FF_t$ be the smallest $\sigma$-algebra containing each $\sigma(\{X_n(s): 0\leq s\leq t\})$, $n\in\NN$. Let $\Omega_0$ be the subset of $\Omega$ on which $X_n(t)$ converges uniformly in $t$. By arbitrarily redefining $X(t;\omega)\equiv 0$ for $\omega\in\Omega\backslash\Omega_0$, we see that $X$ is adapted to $\{\FF_t\}$, and clearly $X_t-X_s$ is independent of $\FF_s$ for each $0\leq s\leq t$. Thus, by Theorem \ref{thm:Levy-finite}, for any stopping time $\tau$ relative to $\{\FF_t\}$,
\begin{equation}
\rE\big[f\big(M_n(T)-X_n(\tau)\big)\big]\leq\sE\big[f\big(M_n(T)-X_n(T)\big)\big].
\label{eq:finite-n-inequality}
\end{equation}
Now it follows from the uniform convergence of $X_n$ to $X$ that, pointwise on $\Omega_0$, $M_n(T)\to M(T)$ and $X_n(\tau)\to X(\tau)$, and hence, by the continuity of $f$, $f\big(M_n(T)-X_n(\tau)\big)\to f\big(M(T)-X(\tau)\big)$ and $f\big(M_n(T)-X_n(T)\big)\to f\big(M(T)-X(T)\big)$. Thus, taking limits in \eqref{eq:finite-n-inequality} we see via the Bounded Convergence Theorem that
\begin{equation*}
\rE\big[f\big(M(T)-X(\tau)\big)\big]\leq \sE\big[f\big(M(T)-X(T)\big)\big].
\end{equation*}
Therefore, the rule $\tau\equiv T$ is optimal. A similar argument shows that the rule $\tau\equiv 0$ is optimal if $X$ is LSS. 
\end{proof}

\begin{remark}
{\rm
If we try to extend the above reasoning to unbounded continuous $f$ via the Dominated Convergence Theorem, we run into the difficulty of bounding expectations such as $\rE|f(M_n(T))|$ uniformly in $n$, since there is no guarantee that $\rE|f(M_n(T))|$ converges to $\rE|f(M(T))|$.
}
\end{remark}

\begin{remark}
{\rm
It may seem that in Theorem \ref{thm:Levy-general} we could have weakened the SRSS condition similarly, replacing (a) and (b) in Definition \ref{def:SRSS} with \eqref{eq:weaker-RSS-condition}. But this would not actually give a weaker hypothesis, since in the presence of condition (d), the integral in \eqref{eq:almost-symmetric} increases monotonically as $\eps\downarrow 0$.
}
\end{remark}

\section*{Acknowledgements}

This work was started while the author was on sabbatical in Kyoto, Japan. The author wishes to thank the Kyoto University Mathematics Department and the Research Institute for Mathematical Sciences for their warm hospitality during 2009. The author is grateful to two anonymous referees for several suggestions to improve the presentation of this paper.

\footnotesize


\begin{thebibliography}{14}

\bibitem{Allaart} \textsc{Allaart, P. C.} (2010). A general ``bang-bang" principle for predicting the maximum of a random walk. {\em J. Appl. Probab.} {\bf 47}, no. 4, 1072--1083.

\bibitem{Applebaum} \textsc{Applebaum, D.} (2009). {\em L\'evy processes and stochastic calculus}, 2nd edition, Cambridge University Press.

\bibitem{BDP1} \textsc{Bernyk, V., Dalang, R. C.} and \text{Peskir, G.} (2008). The law of the supremum of a stable L\'evy process with no negative jumps. {\em Ann. Probab.} {\bf 36}, 1777--1789. %\MR{2440923}

\bibitem{BDP2} \textsc{Bernyk, V., Dalang, R. C.} and \text{Peskir, G.} (2011). Predicting the ultimate supremum of a stable L\'evy process with no negative jumps. {\em Ann. Probab.} {\bf 39}, no. 6, 2385--2423.

\bibitem{Bertoin} \textsc{Bertoin, J.} (1996). {\em L\'evy processes}, Cambridge University Press.

\bibitem{CGMY} \textsc{Carr, P.}, \textsc{Geman, H.}, \textsc{Madan, D.} and \textsc{Yor, M.} (2002). The fine structure of asset returns: An empirical investigation. {\em J. Business} {\bf 75}, 305--332.

\bibitem{DuToit1} \textsc{Du Toit, J.} and \textsc{Peskir, G.} (2007). The trap of complacency in predicting the maximum. {\em Ann. Probab.} {\bf 35}, 340--365. %\MR{2303953}

\bibitem{DuToit2} \textsc{Du Toit, J.} and \textsc{Peskir, G.} (2009). Selling a stock at the ultimate maximum. {\em Ann. Appl. Probab.} {\bf 19}, 983--1014. %\MR{2537196} 

\bibitem{GPS} \textsc{Graversen, S. E.}, \textsc{Peskir, G.} and \textsc{Shiryaev, A. N.} (2000). Stopping Brownian motion without anticipation as close as possible to its ultimate maximum. {\em Theory Probab. Appl.} {\bf 45}, 41--50. %\MR{1810977}

\bibitem{Hlynka} \textsc{Hlynka, M.} and \textsc{Sheahan, J. N.} (1988). The secretary problem for a random walk. {\em Stoch. Proc. Appl.} {\bf 28}, 317--325. %\MR{952837}

\bibitem{Marshall} \textsc{Marshall, A. W.} and \textsc{Olkin, I.} (1979). {\em Inequalities: Theory of majorization and its applications}, Academic Press, New York.

\bibitem{Pedersen} \textsc{Pedersen, J. L.} (2003). Optimal prediction of the ultimate maximum of Brownian motion. {\em Stoch. Stoch. Rep.} {\bf 75}, 205--219. %\MR{1994906}

\bibitem{Sato} \textsc{Sato, K.-I.} (1999). {\em L\'evy processes and infinite divisibility}, Cambridge University Press.

\bibitem{SXZ} \textsc{Shiryaev, A. N.}, \textsc{Xu, Z.} and \textsc{Zhou, X. Y.} (2008). Thou shalt buy and hold. {\em Quant. Finance} {\bf 8}, 765--776. 
%\MR{2488734}

\bibitem{YYZ} \textsc{Yam, S. C. P.}, \textsc{Yung, S. P.} and \textsc{Zhou, W.} (2009). Two rationales behind `buy-and-hold or sell-at-once'. {\em J. Appl. Probab.} {\bf 46}, 651--668. %\MR{2560894}

\end{thebibliography}
\end{document}